\documentclass[11pt]{amsart}
\usepackage{geometry}                
\usepackage{graphicx}
\usepackage{amssymb}
\usepackage{epstopdf}

\usepackage{amsmath,amssymb}
\usepackage{enumerate}
\usepackage{skak}
\usepackage{tikz}
\usetikzlibrary{arrows.meta,automata,quotes}
\usetikzlibrary{decorations.pathreplacing,calc}
\usetikzlibrary{backgrounds}
\usepackage{pgfplots}

\usepackage[colorlinks]{hyperref}
\usepackage[T1]{fontenc}
\usepackage[utf8]{inputenc}
\usepackage{mathrsfs}  
\usepackage{caption}
\theoremstyle{plain}
\newtheorem{thm}{Theorem}
\newtheorem{corollary}[thm]{Corollary}

\newtheorem{lemma}[thm]{Lemma}

\newtheorem{thmm}{Theorem}
 
\theoremstyle{definition}

\newtheorem{question}[thm]{Question}
\newtheorem{remark}[thm]{Remark}
\newtheorem{example}[thm]{Example}

\newcommand{\N}{\ensuremath{\mathbb{N}}}
\renewcommand{\P}{\ensuremath{\mathscr{P}}}
\newcommand{\W}{\mathcal W}

\renewcommand{\ge}{\geqslant}
\renewcommand{\le}{\leqslant}
\renewcommand{\geq}{\geqslant}
\renewcommand{\leq}{\leqslant}

\title{Additive subtraction games}
\author{Urban Larsson}
\address{Department of Industrial Engineering and Operations Research, IIT Bombay, India}
\email{larsson@iitb.ac.in}
\author{Hikaru Manabe}
\address{College of Information Science, School of Informatics, University of Tsukuba, Japan}
\email{urakihebanam@gmail.com}
\begin{document}

\begin{abstract}
We determine the full nim-value structure of additive subtraction games in the {\em primitive quadratic} regime. The problem appears in Winning Ways by Berlekamp et al. in 1982; it includes a closed formula, involving Beatty-type {\em bracket expressions} on rational moduli, for determining the \P-positions, but to the best of our knowledge, a complete proof of this claim has not yet appeared in the literature; Miklós and Post (2024) established outcome-periodicity, but without reference to that closed formula. The {\em primitive quadratic} case captures the source of the quadratic complexity of the problem, a claim supported by recent research in the dual setting of sink subtraction by Bhagat et al. This study focuses on a number theoretic solution involving the classical closed formula, and we establish that each nim-value sequence resides on a linear shift of the classical \P-positions. 
\end{abstract}
\date{\today}
\maketitle

\section{Introduction}

Imagine a heap of counters sitting between two players, who take turns 
removing exactly $a$, $b$, or $a+b$ counters, where $0<a<b$ are given 
integers. The player who cannot move, because every legal removal would 
make the heap negative, loses. Simple to state, yet the question of who 
wins from any given heap size turns out to conceal surprisingly deep 
arithmetic.

This is an instance of a {\sc subtraction game} \cite{berlekamp2004winning}, 
in the sub-class of {\sc additive subtraction}. The tool for answering the 
winning question, not just for a single heap, but for any number of heaps 
played together, is the \emph{nim-value} (or nimber) of a position. When 
two or more such games are played in parallel, with a player choosing on each 
turn which component game to move in, the combined game is called a 
\emph{disjunctive sum}. See Figure~\ref{fig:play}.

\begin{figure}[ht]
    \centering
    \includegraphics[width=0.8\linewidth]{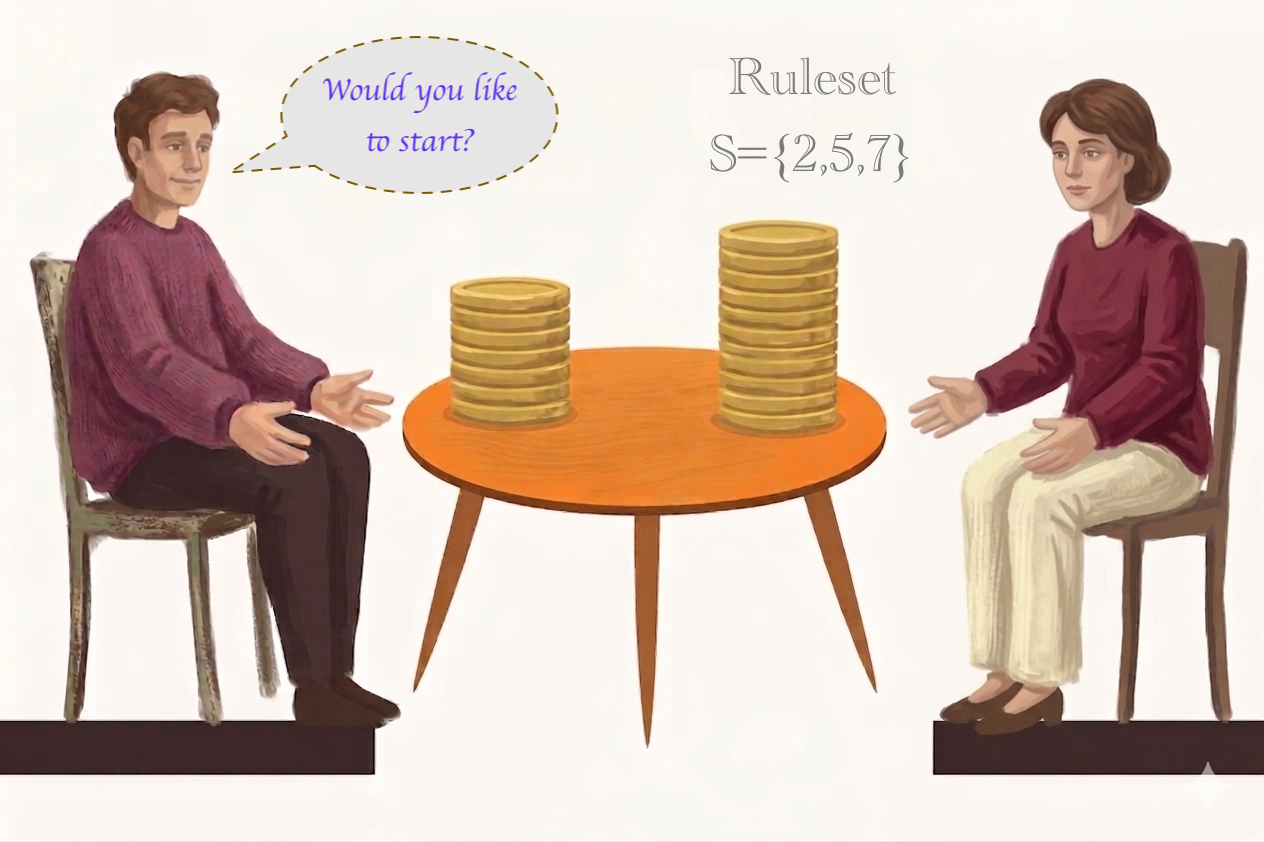}
    \caption{A position of {\sc additive subtraction}, with ruleset $S=\{2,5,7\}$,
    consisting of two heaps of sizes seven and eleven, played in a disjunctive sum. The player making the last move wins.
    Should Alice accept Bob's offer to move first?}
    \label{fig:play}
\end{figure}

The famous Sprague--Grundy theory \cite{bouton1901nim, Sprague1935, grundy1939mathematics} tells us that every such component has a nim-value, a single non-negative integer, and the nim-value of the disjunctive sum is simply the bitwise XOR of the nim-values of its components. Suppose, for example, that you face two heaps, perhaps of different sizes, but both with nim-value three; the theory reveals that the position is losing for the player to move, independently of which heap they touch. The nim-values therefore reveal a complete winning strategy across any combination of rulesets and heap sizes.

The nim-value of a position is assigned recursively: it is the smallest 
non-negative integer not among the nim-values of all the move options; this is the classical \emph{mex} (minimum excludant) rule. We think of it in terms of two components: for each position, one must verify 
both \emph{anti-collision} (the same value does not appear among the options) 
and \emph{reachability} (every smaller value appears among the options). For 
example, if a position can reach nim-values $\{0,1,3\}$, its own nim-value is 
two. Positions of nim-value zero are the \emph{\P-positions}, the losing 
positions for the player to move; they are justified by anti-collision alone. 
For our three-move game $S=\{a,b,a+b\}$, the mex-rule implies all nim-values 
lie in $\{0,1,2,3\}$, since at most three options are available from any 
position. The question is: ``which positions get which value?''

The recursive algorithm answers this for small heap sizes. Let us illustrate 
it for the game in Figure~\ref{fig:play}, computing just enough to guide 
Alice towards the right answer to Bob's question.

\begin{table}[h]
\caption{Nim-values for the ruleset~$S=\{2,5,7\}$.}
\centering
\begin{tabular}{|c|c|c|c|c|c|c|c|c|c|c|c|c|}
\hline
Heap size & 0 & 1 & 2 & 3 & 4 & 5 & 6 & 7 & 8 & 9 & 10 & 11 \\ \hline
Nim-value & 0 & 0 & 1 & 1 & 0 & 2 & 1 & 3 & 2 & 2 & 0  & 3  \\ \hline
\end{tabular}

\end{table}

Both heaps have nim-value three, so their XOR is $3\oplus 3=0$: this is a 
\P-position, and Alice should \emph{decline} Bob's offer. But for large 
heaps, or many heaps, direct computation becomes infeasible. 

We prove here that each nimber sequence has an elegant closed form formula, which makes computation tractable.\footnote{Since it is known that all subtraction games are periodic \cite{golomb1966mathematical}, one could argue that every subtraction game is tractable, even without knowledge of a closed formula. However, Flammenkamp \cite{Flammenkamp_1997} conjectures that there exist sequences of games with exponential period length in terms of $\max S$, which, if true, would slow down any such semi-tractable approach considerably.} The  sequence of \P-positions has the following form in the general case including $S=\{2,5,7\}$. For a given instance of additive subtraction with game parameters $0<a<b$, for all $n\in\N=\{0,1,\ldots\}$, let 
\begin{align}\label{eq:wn}
  w_n := n + a\Bigl\lfloor\frac{n}{a}\Bigr\rfloor +
  b\Bigl\lfloor\frac{n}{\delta}\Bigr\rfloor,
\end{align}
where $\delta = b-a$. A slight generalization of this formula, involving two interlocking floor 
functions, first appeared in \cite{berlekamp2004winning} in 1982, but 
without a proof. To the best of our knowledge, no proof has appeared in 
the literature since, although it appeared in a similar form in \cite{Flammenkamp_1997} in the late 1990s. The present paper closes that gap and goes further: we determine the complete nim-value structure. Note that the nim-value of any position smaller than $a$ is zero, which also follows directly by \eqref{eq:wn}, since both floor terms evaluate to zero. 

We work in the \emph{primitive quadratic regime}, $a<\delta<2a$ with $\gcd(a,\delta)=1$, so we study the ruleset family {\sc primitive quadratic additive subtraction}. This is the first parameter range in which the two scales $a$ and $\delta$ interact non-trivially, and it captures the core complexity of the problem; see also the dual setting of Bhagat et al.~\cite{Bhagat}. We use the following notation throughout. If $X\subset\mathbb{N}$ and $y\in\mathbb{N}$, then
\[
  X+y := \{x+y : x\in X\}
  \quad\text{and}\quad
  X-y := \{x-y : x\in X,\, x\ge y\},
\]
so that $X-y$ contains only non-negative integers. Writing 
$\W_0=\{w_n:n\ge 0\}$, the four nim-value classes are
\[
  \W_0,\quad \W_1=\W_0+a,\quad \W_2=\W_0-b,\quad 
  \W_3=(\W_0-\delta)\setminus\W_0.
\]
In words: three of the four classes are simple shifts of the \P-positions, while the fourth, the nim-value three positions, and the most intricate, is what remains after removing the $\delta$-shift's overlap with $\W_0$.

\begin{thm}[Main Theorem]\label{thm:main}
Assume $a<\delta<2a$ and $\gcd(a,\delta)=1$, with $b=a+\delta$. Then 
$\W_0,\W_1,\W_2,\W_3$ partition $\mathbb{N}$, and $\W_i$ is precisely 
the set of positions of nim-value $i$, for $i\in\{0,1,2,3\}$.
\end{thm}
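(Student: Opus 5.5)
Since nim-values are defined by the mex recursion, the approach is to reduce the theorem to a finite list of set-theoretic facts about $\W_0$ and then check those facts arithmetically. Define $g(x)=i$ for $x\in\W_i$. Once we know the $\W_i$ partition $\mathbb N$, it suffices to verify the mex rule at every $x$: by strong induction on $x$, $g(x)$ equals the nim-value. Concretely, for $x\in\W_i$ we need \emph{anti-collision}, meaning none of $x-a,x-b,x-a-b$ (when nonnegative) lies in $\W_i$. We also need \emph{reachability}, meaning every $j<i$ is attained by some option.

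The first step is to describe $\W_0$ explicitly. Write $n=q a+r$ with $0\le r<a$ and $k=\lfloor n/\delta\rfloor$. Then $w_n=n+aq+bk$, so $w_{n+1}-w_n=1$, except that it gains an extra $a$ when $a\mid n+1$ and an extra $b$ when $\delta\mid n+1$. Because $\gcd(a,\delta)=1$, both jumps happen together only when $a\delta\mid n+1$. This turns $\W_0$ into a union of runs of consecutive integers separated by gaps of length $a$, $b$ or $a+b$. The run lengths are governed by how the multiples of $a$ and of $\delta$ interleave. Here $a<\delta<2a$ means that between consecutive multiples of $\delta$ there are either one or two multiples of $a$. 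It follows that runs have length at most $a$ and that the pattern is a two-letter Sturmian-type word in the ratio $\delta/a$.

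Next, using this run/gap description, we prove the structural lemmas in order.

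(i) $\W_0$ has no two elements differing by $a$, $b$ or $a+b$. This is anti-collision for $0$, and it follows because runs have length at most $a$ and gaps are at least $a$. It needs care exactly at the places where a gap of $a$ is followed by a short run.

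(ii) $\W_0$, $\W_0+a$ and $\W_0-b$ are pairwise disjoint. Moreover, every $x\notin\W_0$ has $x-a$, $x-b$ or $x-a-b$ in $\W_0$; this is reachability of $0$, and it is a complementarity statement: the gaps are exactly covered by the shifts.

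(iii) The sets $\W_0\cup(\W_0+a)\cup(\W_0-b)$ and $(\W_0-\delta)\setminus\W_0$ together cover $\mathbb N$ and are disjoint.

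Anti-collision for $\W_1$ and $\W_2$ is inherited from (i) by translation. The exception is the truncation in $\W_0-b$ near $0$, which is handled by the small initial segment $[0,a)\subset\W_0$.

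For reachability, consider $x=w+a\in\W_1$: it reaches $w\in\W_0$ directly. For $x=w-b\in\W_2$, we need an option in $\W_0$ and one in $\W_1$. The natural candidate for $\W_1$ is $x-a-b$... which is not quite right; instead we check that $x-a=w-a-b$ and $x-b$ land in $\W_0$ or $\W_0+a$, using the local gap pattern. For $x=w-\delta\in\W_3$ the options are $w-b$ (in $\W_2$), $w-\delta-a-b$ and $w-2a-\delta$. We must show that the latter two hit $\W_0$ and $\W_1$ (or $\W_1$ and $\W_0$), and that no option is itself in $\W_3$.

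I expect the main obstacle to be $\W_3$. Both its anti-collision and the partition claim (iii) require controlling $(\W_0-\delta)\cap\W_0$, i.e.\ pairs $w_m-w_n=\delta$. By the jump description, such pairs occur precisely inside runs of length greater than $\delta-$(gap) at the two kinds of junctions. My plan is to parametrize positions by $(q,r)$ relative to the last multiple of $\delta$ and do a case split on whether the current $\delta$-block contains one or two multiples of $a$. In each case I would verify the needed memberships by direct computation of $w_{n\pm1}$, and check the identity $|\W_3\cap[0,N)|=N-3|\W_0\cap[0,N)|+O(1)$ as a density sanity check. That check makes the partition plausible: $\W_0$ has density $1/(1+a\cdot\tfrac1a+b/\delta)\cdot$…, and the four classes each need density consistent with period $a\delta$ in $n$.
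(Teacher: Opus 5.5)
\textbf{Main gap: coverage is not proved.} Your claim (iii) is never established, and neither is the ``complementarity'' in (ii). For (iii) you offer a case split to be checked ``by direct computation of $w_{n\pm1}$'' and a density identity that you say makes the partition \emph{plausible}. Once the four sets are known to be pairwise disjoint and $h$-periodic, the identity $|\W_3\cap[0,N)|=N-3|\W_0\cap[0,N)|+O(1)$ is \emph{equivalent} to coverage. So it is the statement to be proved, not a sanity check. Nearest-neighbour computations also cannot settle it: whether a hole element $x$ lies in $\W_0-b$ or $\W_0-\delta$ depends on $\W_0$ at distances $b$ and $\delta$ beyond $x$, which is many indices away.

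The missing idea is a count. Disjointness follows at once from anti-collision of $\W_0$; for example, a point of $\W_3\cap(\W_0+a)$ would give two elements of $\W_0$ differing by $b$. It then suffices to show that $(\W_0-\delta)\cap\W_0$ has exactly $ad$ elements per heap-period, since then $|\W_3|_h=a\delta-ad=a^2$ and $3a\delta+a^2=(3\delta+a)a=h$. The count goes as follows. A pair $w_n-w_m=\delta$ must consist of exactly one $(a+1)$-gap and $d-1$ unit gaps. The $(a+1)$-gaps in an index-period are labelled by $\rho\equiv qa\pmod\delta$, which runs once through $1,\dots,\delta-1$ because $\gcd(a,\delta)=1$. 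The unit runs flanking the gap labelled $\rho$ have lengths $\min(a-1,\rho-1)$ and $\min(a-1,\delta-\rho-1)$. So that gap lies in exactly $\min(\rho,d,\delta-\rho)$ windows, and these numbers sum to $ad$. Nothing in your plan plays this role.

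\textbf{Error in the $\W_2$ reachability step.} It is false that $x-a$ and $x-b$ always land in $\W_0\cup\W_1$. Take $S=\{2,5,7\}$ ($a=2$, $\delta=3$): then $x=9=w_5-b\in\W_2$ has options $7\in\W_3$, $4\in\W_0$ and $2\in\W_1$. Value one is reached only via $x-a-b$, the candidate you discarded. What works is this. First, $x-b=w_{n+\delta}-2b$ equals $w_n$ or $w_n+a$ according as $(n+\delta)\bmod a$ is $\ge d$ or $<d$, so value zero is always reachable. Second, if $x-b\in\W_0$ and $x-a\in\W_3$, then anti-collision with $x-b$, $x$ and $x-a$ excludes $x-a-b$ from $\W_0$, $\W_2$ and $\W_3$ respectively. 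By coverage it therefore lies in $\W_1$.

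\textbf{Missing mechanism for $\W_3$.} You state the goal but not how to reach it. Note also that the option $x-b$ is $w-a-2\delta$, not $w-2a-\delta$. One needs the split $n\bmod a<d$ versus $n\bmod a\ge d$. In the first case one needs the identity $w_n-w_{n-d}=2b-a$, and this is where the hypothesis $w_n-\delta\notin\W_0$ enters.

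\textbf{Anti-collision for $\W_0$.} The fact that runs have length at most $a$ and gaps at least $a$ only rules out the difference $a$. Ruling out $b$ and $a+b$ requires that any $\delta$ consecutive indices contain a multiple of $\delta$. For example, this is what forces the unit runs flanking an $(a+1)$-gap to total at most $\delta-2$.
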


Three of the four classes are handled with relatively fewer complications: $\W_0$ by a number-theoretic anti-collision argument, $\W_1$ by Ferguson's pairing principle \cite{berlekamp2004winning} (the smallest move $a$ pairs every \P-position with a unique nim-value one position), and $\W_2$ by a straightforward induction on reachability combined with nim-value exclusion. The nim-value three positions in $\W_3$ are the most delicate: we must establish their count per period, and we use a collision-counting argument that forms the arithmetic heart of the paper. 

We call \eqref{eq:wn} a ``bracket expression on three terms'', and more generally, an expression $$\sum_{i=1}^{t}c_i\lfloor n\alpha_i\rfloor,$$ with integer coefficients $c_i$ and positive rational moduli $\alpha_i$, is a {\em bracket expression} on $t$ terms. 

As we will see, the arithmetic structure of the problem ultimately stems from the index-regular gap structure of the \P-position sequence \eqref{eq:wn}: only four gap sizes appear, and their locations are 
determined exactly by the residue classes of the index modulo $a$ and 
$\delta$. The index-period length of \eqref{eq:wn} is $a\delta$, which implies that the 
heap-period length is quadratic in $a$, namely $h:=(3\delta+a)a$. The densities of the four classes 
are therefore
\[
  \frac{\delta}{3\delta+a},\quad\frac{\delta}{3\delta+a},\quad
  \frac{\delta}{3\delta+a},\quad\frac{a}{3\delta+a},
\]
for $\W_0,\W_1,\W_2,\W_3$ respectively. Notably, the nim-value three 
positions are not only the most intricate, they are also the rarest, 
with density strictly less than the other three classes, reflecting the primitive quadratic inequality $a<\delta$.

Outside the primitive quadratic regime, essentially nothing new occurs: either one denominator becomes a multiple of the other in \eqref{eq:wn}, which reduces the problem to linear complexity, or certain pairs of nim-value ``factors''   
expand regularly by $2a$ with each increase of $b$ by $2\delta_0$ (if $\delta_0$ is defined by the primitive quadratic regime), preserving quadratic complexity; see~\cite{Bhagat}.

\begin{remark}
A recent result of Miklós and Post \cite[Theorem~21]{MiklosPost} establishes 
periodicity of the outcome sequence for additive subtraction games 
$\{a,b,a+b\}$; see also \cite[Theorem~4]{YutoMoriwaki} where they study 
nim-value periodicity. These studies concern block constructions, similar 
to \cite{Bhagat}, and do not mention bracket expressions and their affine 
translations for partitioning the natural numbers to describe the nim-values. 
In contrast, the present paper determines the complete nim-value structure 
in the primitive quadratic regime and provides an explicit description of 
all four nim-value classes.
\end{remark}

The paper is organized as follows. Section~\ref{sec:anticoll} establishes 
anti-collision for the candidate set of \P-positions, $\W_0$, proving that no move 
connects any two elements of $\W_0$, and deduces the nim-value one structure of 
$\W_1$, via Ferguson's pairing principle. Section~\ref{sec:two} handles the 
nim-value two and three positions, with the key reachability arguments 
supported by Figure~\ref{fig:winning_model}, which gives a geometric interpretation  
of the central arithmetic mechanism. Section~\ref{sec:numtheo} isolates the 
number-theoretic heart of the paper via a collision-counting argument. We demonstrate  
that, in one heap-period, exactly $ad$ elements of $\W_0$ lie within distance 
$\delta$ of another element of $\W_0$, from which the nim-value three count 
follows. Section~\ref{sec:numexa} illuminates the number-theoretic approach with three 
explicit numerical examples. Section~\ref{sec:gametheo} assembles all ingredients into the proof of Theorem~\ref{thm:main}, and Section~\ref{sec:future} closes with open problems and directions for future 
work. 
\section{Anti-collision of candidate \P-positions}\label{sec:anticoll}
In this section, we prove the first part of the mex-verification, anti-collision, for the candidate set of \P-positions. Note that the  reachability part is empty, since we here study zero values. However, since any `independent subset of the natural numbers' satisfies the anti-collision requirement, the true positioning of the zero values will only be confirmed when we have established the complete nim-value tetra-partitioning of all game positions, as in Theorem~\ref{thm:main}. 

\begin{lemma}[Anti-collision]\label{lem:anticollision}
Let $S=\{a,b,a+b\}$ with $b=a+\delta$ and $a<\delta<2a$.
For all $n\in\N$, let 
\[
  w_n
  =
  n
  +
  a\Big\lfloor\frac{n}{a}\Big\rfloor
  +
  b\Big\lfloor\frac{n}{\delta}\Big\rfloor. 
\]
Then for all $n>m$,
\[
  w_n-w_m \notin S.
\]
\end{lemma}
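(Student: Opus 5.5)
The plan is to write the difference explicitly and do a short case analysis on how many multiples of $a$ and of $\delta$ lie in the index window. Put $k=n-m\ge 1$,
\[
A=\Bigl\lfloor\frac{n}{a}\Bigr\rfloor-\Bigl\lfloor\frac{m}{a}\Bigr\rfloor,\qquad
B=\Bigl\lfloor\frac{n}{\delta}\Bigr\rfloor-\Bigl\lfloor\frac{m}{\delta}\Bigr\rfloor,
\]
so that $w_n-w_m=k+aA+bB$.

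\textbf{Step 1 (bounds on $A$ and $B$).} $A$ counts the multiples of $a$ in the half-open window $(m,n]$ of length $k$, and $B$ counts the multiples of $\delta$ there. This gives
\[
\lfloor k/a\rfloor\le A\le\lceil k/a\rceil,\qquad \lfloor k/\delta\rfloor\le B\le\lceil k/\delta\rceil.
\]
In particular $A\ge 1$ whenever $k\ge a$, $B\ge 1$ whenever $k\ge\delta$, and $A=0$ whenever $k<a$. We then compare $k+aA+bB$ with the three targets $a$, $a+\delta$ and $2a+\delta$ in turn.

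\textbf{Step 2 (case $B\ge 1$).} Here $w_n-w_m\ge 1+b>b$, so only $2a+\delta$ is possible.
\begin{itemize}
\item If $B\ge 2$, the difference is at least $1+2b>2a+\delta$, which is too large.
\item If $B=1$, we would need $k+aA=a$. Either $A=1$ and $k=0$, which contradicts $k\ge 1$. Or $A=0$ and $k=a$, which contradicts $A\ge 1$ for $k\ge a$.
\end{itemize}

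\textbf{Step 3 (case $B=0$).} Now $k<\delta<2a$, so $A\le 2$, and we need $k+aA\in\{a,a+\delta,2a+\delta\}$.
\begin{itemize}
\item Target $a$: this forces $(A,k)=(0,a)$ or $(1,0)$. The first is excluded by Step 1 and the second by $k\ge 1$.
\item Target $a+\delta$: $A=0$ gives $k=a+\delta\ge\delta$, and $A=1$ gives $k=\delta$; both contradict $B=0$. $A=2$ gives $k=\delta-a<a$, which contradicts $A\le\lceil k/a\rceil=1$.
\item Target $2a+\delta$: $A=2$ gives $k=\delta$, contradicting $B=0$. $A\in\{0,1\}$ gives $k\ge a+\delta>\delta$, again contradicting $B=0$.
\end{itemize}

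I expect no serious obstacle. The only care needed is in Step 1, stating the window-counting bounds correctly, since everything else is bookkeeping. The hypothesis $\delta<2a$ is used only to cap $A\le 2$ when $B=0$, and $a<\delta$ is used only in the sub-case $A=2$, $k=\delta-a$, where it guarantees $k\ge 1$ so that the bound $A\le\lceil k/a\rceil=1$ applies.
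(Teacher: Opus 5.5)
Your proof is correct and is essentially the paper's argument: the same decomposition $w_n-w_m=k+aA+bB$ and the same sub-cases, ordered by $B$ first rather than by target value, with the cap $A\le 2$ sparing you the paper's $\alpha=3$ sub-case. Two slips in the commentary, neither used in the argument: the Step~1 aside that $A=0$ whenever $k<a$ is false (take $m=a-1$, $n=a$), though you only ever use the correct bound $A\le\lceil k/a\rceil$; and $\delta<2a$ is needed not only for $A\le 2$ but also to get $\delta-a<a$ in the $A=2$ sub-case.
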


\begin{proof}
Let 
\[
  A(n)=\Big\lfloor\frac{n}{a}\Big\rfloor\ \text{ and }\  B(n)=\Big\lfloor\frac{n}{\delta}\Big\rfloor.
\]
Then $w_n = n + aA(n) + bB(n)$. For $n>m$, we will study the {\em gaps} 
\begin{align}\label{eq:gap}
g(n,m)&:=w_n - w_m\\ \notag
  &= \ell + a\alpha + b\beta, \notag
\end{align}
where   $\ell := n-m\ge 1$,  $\alpha := A(n)-A(m)\ge 0$ and $\beta := B(n)-B(m)\ge 0$. We must prove that, for all $n>m$,  $g(n,m) \not\in\{a,b,a+b\}$. The proof is by contradiction. 

\smallskip\noindent
\emph{Case 1:} Suppose that $g(n,m)=a$.  Then $\ell + a\alpha + b\beta = a$
and hence $\ell + a(\alpha-1) + b\beta = 0.$  Since $\ell>0$ this forces $\alpha=\beta=0$, and hence $\ell=a$. But, by definition of the floor function, $\ell=n-m=a$ implies that $A(n)-A(m)=1$, contradicting $\alpha=0$. 
Thus $g(n,m)\neq a$. 

\smallskip\noindent
\emph{Case 2:} Suppose $g(n,m)=b=a+\delta$.
Then $\ell + a\alpha + b\beta = b$, so that
\begin{equation}\label{eq:b}
  \ell + a\alpha + b(\beta-1) = 0.
\end{equation}
If $\beta\ge 1$ then $b(\beta-1)\ge 0$, and since $a\alpha\ge 0$ and $\ell\ge 1$, we reach a contradiction. Hence $\beta=0$, which gives
 \begin{align}\label{eq:elladelta}
 \ell + a\alpha = a+\delta.
 \end{align}
 Since $\delta <2a$ and $\ell>0$, thus $\alpha\le 2$. 
\begin{itemize}
\item If $\alpha=0$, then $\ell=n-m=a+\delta>a$, so $A(n)-A(m)\ge 1$, contradicting $\alpha=0$. 
\item If $\alpha=1$, then $\ell=n-m=\delta$, which implies $B(n)-B(m)=1$, contradicting $\beta=0$.
\item If $\alpha=2$, then $\ell=n-m=\delta-a$. Since $a<\delta<2a$, we have $0<\delta-a<a$, so $n-m<a$, which forces $A(n)-A(m)\le 1$, contradicting $\alpha=2$.
\end{itemize}
Thus $g(n,m)\neq b$.

\smallskip\noindent
\emph{Case 3:} Suppose $g(n,m)=a+b=2a+\delta$. 
Then 
\begin{align}\label{eq:ellab}
\ell + a\alpha + b\beta = 2a+\delta. 
\end{align}
Since $\ell$ is positive we have
\begin{align}\label{eq:ab}
  a\alpha+(a+\delta)\beta < 2a +\delta .
\end{align}
Since $\alpha\ge 0$, we must have $\beta\le 1$. If $\beta=1$, then $\ell = a(1-\alpha)$, we must have $\alpha=0$, which implies $\ell=a$. But $\alpha=0$ together with $\ell=a$ is impossible as we saw in Case~1. Thus $\beta\ne 1$. The remaining case, $\beta=0$, by \eqref{eq:ab}, implies that $\alpha\le 3$. In this case, by \eqref{eq:ellab}, $n-m = a(2-\alpha) + \delta$.
\begin{itemize}
\item If $\alpha\le1$, then $ n-m \ge a+\delta> 2a$, and hence, by definition of the floor function, $A(n)-A(m)\ge 2$, contradicting $\alpha\le 1$.
\item If $\alpha=2$, then $n-m=\delta$, which implies $B(n)-B(m)=1$, contradicting $\beta=0$.
\item If $\alpha =3$, then $n-m=\delta-a<a$, which, by definition of floor function implies $1\ge A(n)-A(m)=3$, a contradiction. 
\end{itemize}
Thus $g(n,m)\neq a+b$.

We have demonstrated that,  for all $n>m$, $g(n,m)\not\in\{a,b,a+b\}$.
\end{proof}

Recall the affine translation of $\W_0$, the position set $\W_1 = \{w_n + a : n\ge 0\}$. By Lemma~\ref{lem:anticollision}, $\W_1$ satisfies the anti-collision property. Moreover, every position $x$ in this set has an option in $\mathcal W_0$, namely  $x-a$. If the set $\mathcal W_0$ constitutes the set of \P-positions, then, since $a=\min S$, Ferguson's pairing principle \cite{berlekamp2004winning} implies that every position in $\W_1$ has nim-value one, and there is no other position of value~one. Indeed, by induction, we can already deduce that each $x\in \W_1$ has nim-value one. (The other direction will follow only as we prove the main theorem.) 

\section{Higher nim-values}\label{sec:two}
In this section, we study positions with nim-values two and three. 
For all $n\ge 0$, let $v_{n} := w_{n+\delta} - b$. Then $\W_2 := \{v_{n}:n\ge 0\}$. 

The following lemma identifies the key arithmetic property of positions in $\W_2$. It makes explicit the effect of subtracting $b$ from any element of $\W_2$. Observe that, since $w_\delta=2b$ and by monotonicity, 
\begin{align}\label{eq:vnbge0}
v_n-b=w_{n+\delta}-2b\ge 0.
\end{align}
If $d=\delta -a$, then $0<d<a$. 
\begin{lemma}\label{lem:2b}
Consider the primitive quadratic regime. 
Let $n=qa+r$ with $0\le r<a$ and let $d=\delta-a$.
Then for all $n\ge \delta$,
\[
  w_n - 2b =
  \begin{cases}
    w_{n-\delta},     &\text{if } r \ge d,\\[6pt]
    w_{n-\delta} + a, &\text{if } r < d.
  \end{cases}
\]
\end{lemma}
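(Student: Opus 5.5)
We compute $w_n - w_{n-\delta}$ directly from the bracket expression \eqref{eq:wn}, using the same decomposition as in the proof of Lemma~\ref{lem:anticollision}, with $\ell=\delta$. Writing $m=n-\delta\ge 0$, we have
\[
  w_n - w_m = \delta + a\bigl(A(n)-A(m)\bigr) + b\bigl(B(n)-B(m)\bigr).
\]
The $\delta$-term is immediate: $B(n)-B(m)=\lfloor n/\delta\rfloor-\lfloor n/\delta - 1\rfloor = 1$ exactly. Since $2b=2a+2\delta$, this gives
\[
  w_n-2b-w_{n-\delta} = a\bigl(A(n)-A(n-\delta)\bigr) - 2a .
\]
So the whole lemma reduces to showing that $A(n)-A(n-\delta)$ equals $2$ when $r\ge d$ and $3$ when $r<d$.

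For this, we write $\delta = a+d$ with $0<d<a$, which holds in the primitive quadratic regime $a<\delta<2a$. With $n=qa+r$ we get
\[
  n-\delta=(q-1)a+(r-d).
\]
If $r\ge d$, then $0\le r-d<a$, so $A(n-\delta)=q-1$ and the difference is $A(n)-A(n-\delta)=q-(q-1)=1$.

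This contradicts the target value $2$, so before proceeding we recheck the arithmetic. We have $w_n-w_m=\delta+a\alpha+b$ and $2b=2a+2\delta$. Hence
\[
  w_n-w_m-2b = a\alpha - 2a - \delta + \delta + \ldots
\]
Recomputing carefully, $\delta + a\alpha + b - 2b = \delta + a\alpha - a - \delta = a(\alpha-1)$. So the reduction becomes
\[
  w_n-2b = w_{n-\delta} + a(\alpha-1),
\]
and we need $\alpha=1$ when $r\ge d$ and $\alpha=2$ when $r<d$. The case $r\ge d$ was just computed: there $\alpha=1$. If instead $r<d$, then $-a<r-d<0$, so $n-\delta=(q-2)a+(a+r-d)$ with $0\le a+r-d<a$. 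Hence $A(n-\delta)=q-2$ and $\alpha=2$. This gives exactly the two stated cases.

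The only potential obstacle is the boundary behaviour. Because $n\ge\delta$, all indices are nonnegative. In the case $r<d$, the floor still behaves as computed even when $q-2<0$ would be an issue; but that cannot happen, since $n\ge\delta=a+d>a+r$ forces $q\ge 2$. The hypothesis $\gcd(a,\delta)=1$ is not needed here.
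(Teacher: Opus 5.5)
Your proof is correct and is essentially the paper's argument: $\lfloor n/\delta\rfloor-\lfloor (n-\delta)/\delta\rfloor=1$, while $\lfloor n/a\rfloor-\lfloor (n-\delta)/a\rfloor$ equals $1$ when $r\ge d$ and $2$ when $r<d$. Hence $w_n-w_{n-\delta}=\delta+a\alpha+b$ equals $2b$ or $2b+a$. In a final write-up, drop the abandoned first reduction (the one ending in $-2a$, and the display with ``$\ldots$''), since the corrected identity $w_n-2b=w_{n-\delta}+a(\alpha-1)$ is all you need.
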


\begin{proof}
Let
\[
  \Delta_a
  =
  \Big\lfloor\frac{n}{a}\Big\rfloor
  -
  \Big\lfloor\frac{n-\delta}{a}\Big\rfloor \]
  and
  \[
  \Delta_\delta
  =
  \Big\lfloor\frac{n}{\delta}\Big\rfloor
  -
  \Big\lfloor\frac{n-\delta}{\delta}\Big\rfloor.
\]
Then $\Delta_\delta =1$. Let $n=qa+r$ with $0\le r<a$. Then  $n-\delta = qa + (r-\delta)$. By $a<\delta<2a$, we get 
\[
  \Delta_a
  =
  \begin{cases}
    2, & r < \delta-a,\\[4pt]
    1, & r \ge \delta-a.
  \end{cases}
\]
Let us estimate 
\begin{align*}
  w_n - w_{n-\delta}
  &=
  \delta + a\Delta_a + b\Delta_\delta\\
  &=
  \delta + a\Delta_a + b.
\end{align*}

If $r\ge \delta-a$, then $\Delta_a=1$ and hence  $w_n -  w_{n-\delta}=2b$. If $r<\delta-a$, then $\Delta_a=2$ and hence $w_n - w_{n-\delta}= 2b + a$. Thus $w_n - 2b = w_{n-\delta}\in \W_0$, if $r\ge \delta-a$, and otherwise $w_n - 2b = w_{n-\delta}+a\in \W_1$.
\end{proof}

We also record the following consequence of Lemma~\ref{lem:anticollision}, which states a purely combinatorial fact. For any $v_n\in \W_2$, then, for all $n\ge 0$, by anti-collision in the set $\W_0$,
\begin{align}\label{eq:vna}
  v_n - a = w_{n+\delta} - b - a \not\in \W_0.
\end{align}

Recall the set $\W_3:=(\W_0-\delta)\setminus \W_0$. We will establish that every position in $\W_3$ has nim-value three; First we establish that all three options are available (as required for a position of nim-value three). 
\begin{lemma}\label{lem:boundary}
The following hold:
\begin{itemize}
\item[(i)] If $n < a$, then $w_n < \delta$.
\item[(ii)] If $a \leq n < \delta$, then $w_n - \delta \in \W_0$.
\item[(iii)] If $n \geq \delta$, then $w_n - \delta \geq a + b$.
\end{itemize}
In particular, every element $x\in \W_3$ satisfies $x \geq a+b$, so all three moves are 
available from any position in $\W_3$.
\end{lemma}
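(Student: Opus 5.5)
Each item is a direct evaluation of the floor terms in \eqref{eq:wn}, so the plan is to compute $w_n$ explicitly on each index range and then read off the ``in particular'' clause.

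For (i), if $n<a<\delta$ then both floor terms vanish, so $w_n=n<a<\delta$.

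For (ii), take $a\le n<\delta$. Since $\delta<2a$ we have $n<2a$, so $\lfloor n/a\rfloor=1$ and $\lfloor n/\delta\rfloor=0$. This gives $w_n=n+a$ and $w_n-\delta=n+a-\delta=n-d$, where $d=\delta-a\in(0,a)$. I will check that $n-d$ is itself a small \P-position. We have $n-d\ge a-d>0$, and $n-d<\delta-d=a$. So $m:=n-d$ satisfies $0<m<a$, and by (i) (floors vanish) $w_m=m$. Hence $w_n-\delta=w_m\in\W_0$.

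For (iii), monotonicity of $w$ reduces the claim to the case $n=\delta$. As noted before \eqref{eq:vnbge0}, $w_\delta=\delta+a\lfloor\delta/a\rfloor+b=\delta+a+b=2b$, because $\lfloor \delta/a\rfloor=1$. So $w_n-\delta\ge 2b-\delta=a+b+(b-\delta-a)=a+b$. The last step uses $b=a+\delta$, and the bound is exactly tight at $n=\delta$.

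For the ``in particular'' clause, let $x\in\W_3$, so $x=w_n-\delta\ge0$ with $x\notin\W_0$. Case (i) is impossible, since there $w_n<\delta$ and $w_n-\delta$ would be negative, which is excluded by the definition of $X-y$. Case (ii) is impossible, since there $x\in\W_0$. Hence $n\ge\delta$, and (iii) gives $x\ge a+b$. All three subtractions $a$, $b$ and $a+b$ are therefore legal from $x$.

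The only mildly delicate step is (ii): one has to observe that $n-d$ lands in $[1,a)$, where $\W_0$ coincides with the identity. Everything else is routine floor evaluation.
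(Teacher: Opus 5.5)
Your proof is correct and follows essentially the same route as the paper. You evaluate the floor terms directly on each index range, reduce (iii) to $w_\delta=\delta+a+b$ by monotonicity, and rule out cases (i) and (ii) for the final clause; your explicit check that $0<n-d<a$ in (ii), using $d<a$, supplies a positivity detail the paper leaves implicit.
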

\begin{proof}
Item (i): if $n < a$ then $\lfloor n/a \rfloor = \lfloor n/\delta \rfloor = 0$, so 
$w_n = n < a < \delta$.

Item (ii): if $a \leq n < \delta$ then $\lfloor n/\delta \rfloor = 0$ and 
$\lfloor n/a \rfloor = 1$, so $w_n = n + a$ and $w_n - \delta = n - d$. Since 
$n < \delta = a + d$, we have $n - d < a$, and hence $\lfloor (n-d)/a \rfloor = 
\lfloor (n-d)/\delta \rfloor = 0$, giving $w_{n-d} = n - d$. Thus 
$w_n - \delta = w_{n-d} \in \W_0$.

Item (iii): if $n \geq \delta$ then $\lfloor \delta/a \rfloor = \lfloor \delta/\delta 
\rfloor = 1$, so $w_n\ge w_\delta = \delta + a + b$, by $(w_n)$ strictly increasing. Thus,
 $w_n - \delta \geq a + b$ for all $n \geq \delta$.

The final claim follows from (i) and (ii): no element of $W_3$ can arise from 
$n < a$ (as $w_n - \delta < 0$) or from $a \leq n < \delta$ (as $w_n - \delta \in W_0$), 
so every element of $W_3$ has $n \geq \delta$, and by (iii) satisfies $x \geq a+b$.
\end{proof}
\begin{lemma}\label{lem:two}
Every element of $\W_2$ has nim-value two.
\end{lemma}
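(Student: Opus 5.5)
The plan is to verify the mex conditions at each $x=v_n\in\W_2$ by strong induction on the heap size $x$. Since $\W_2=\W_0-b$ is a translate of $\W_0$, Lemma~\ref{lem:anticollision} gives anti-collision within $\W_2$ at once: no two elements of $\W_2$ differ by an element of $S$. This gives exclusion of the value two among options, but only if we know that every position below $x$ of nim-value two lies in $\W_2$. I therefore expect this lemma to be proved, in effect, inside the joint induction of Theorem~\ref{thm:main}. The inductive hypothesis will be that below $x$ the classes $\W_0,\W_1,\W_2$ are exactly the positions of values $0,1,2$. Under that hypothesis the value two is excluded, and it remains to prove reachability of the values $0$ and $1$.

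Put $m=n+\delta$, so that $x=w_m-b$, and write $m=qa+r$ with $0\le r<a$. By \eqref{eq:vnbge0} the move $b$ is legal. By Lemma~\ref{lem:2b}, $x-b=w_m-2b$ equals $w_n$ if $r\ge d$ and equals $w_n+a$ if $r<d$.

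\emph{Case $r<d$.} Here $x-b=w_n+a\in\W_1$, which has value one by Ferguson pairing. Also $x-(a+b)=w_n\in\W_0$, which has value zero. Both required values are reached, and this case is finished.

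\emph{Case $r\ge d$.} Here $x-b=w_n\in\W_0$ gives the value zero. We still need an option of value one, that is, one of the following:
\[
x-a=w_n+\delta\in\W_1 \iff w_n+d\in\W_0,\qquad x-a-b=w_n-a\in\W_1\iff w_n-2a\in\W_0 .
\]
By \eqref{eq:vna} we already know $x-a\notin\W_0$. I will do a residue analysis of $n$. In this case $n\bmod a=r-d\in[0,a-d)$, so $n+d$ does not cross a multiple of $a$. If $n+d$ also crosses no multiple of $\delta$, then $w_{n+d}=w_n+d$ and the first option works. Otherwise some multiple of $\delta$ lies in $(n,n+d]$. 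In that situation I will show that $n-2a$ (or a nearby index) satisfies $w_n-w_{n'}=2a$ for a suitable $n'$, using the four gap sizes $1,1+a,1+b,1+a+b$ of $(w_k)$ and the placement of these gaps by residues modulo $a$ and $\delta$. This would put $x-a-b$ in $\W_1$. Small $n$ will need a separate check: there one of these options may be negative or out of range, and those cases can be handled using Lemma~\ref{lem:boundary}.

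The main obstacle is the subcase $r\ge d$ in which $n+d$ crosses a multiple of $\delta$. If the direct gap computation does not close it, my fallback is purely inductive. The option $x-a$ is nonzero by \eqref{eq:vna}, and it is not two by anti-collision of $\W_2$. If I can rule out that it has value three, then it must have value one. Ruling out value three means showing $x-a\notin(\W_0-\delta)$, i.e.\ $w_m-a-b+\delta\notin\W_0$. This reduces to $w_m-2a\notin\W_0$, which again becomes a gap statement about $(w_k)$ and should be checkable by the same residue bookkeeping.
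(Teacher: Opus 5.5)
\textbf{Gap in the subcase $r\ge d$.} Your case $r<d$ agrees with the paper: there $x-b=w_n+a\in\W_1$ and $x-a-b=w_n\in\W_0$. Your subcase of $r\ge d$ in which $(n,n+d]$ contains no multiple of $\delta$ is also correct. The gap is the remaining subcase, which is where all the difficulty lies. There you only announce a gap computation, pointing at the wrong index (``$n-2a$''). Otherwise you fall back on proving that $x-a$ does not have value three, and that fallback targets a false statement.

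Put $s=n\bmod\delta$. A multiple of $\delta$ lies in $(n,n+d]$ exactly when $s\ge \delta-d=a$. Since $m=n+\delta$ has the same residue $s$, the interval $(m-a,m]$ contains exactly one multiple of $a$ and no multiple of $\delta$. Hence $w_m-2a=w_{m-a}\in\W_0$. Combined with \eqref{eq:vna}, this gives $x-a\in(\W_0-\delta)\setminus\W_0=\W_3$. So in this subcase $x-a$ always has value three, and ``$w_m-2a\notin\W_0$'' always fails. For instance, with $a=4$, $\delta=7$, $n=4$, we get $x=v_4=19$, whose options are $15\in\W_3$, $8=w_4\in\W_0$ and $4=w_0+a\in\W_1$.

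\textbf{How to close it, and how the paper does it.} Your primary route does close, but you must carry it out, and with index $n-a$. Here $n\ge s\ge a$, and $(n-a,n]$ contains one multiple of $a$ and none of $\delta$. So $w_n-w_{n-a}=2a$, and $x-a-b=w_n-a=w_{n-a}+a\in\W_1$. No separate small-$n$ checks are needed.

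The paper avoids this residue bookkeeping entirely. Once $x-b\in\W_0$, it notes that $x-a$ has value one or three by induction: it is not in $\W_0$ by \eqref{eq:vna}, and not in $\W_2$ by anti-collision. In the value-three alternative, $x-a-b$ is:
\begin{itemize}
\item an option of $x-a$, so it is not value three;
\item an option of $x-b$, so it is not value zero;
\item at distance $a+b$ from $x$, so it is not in $\W_2$.
\end{itemize}
Hence $x-a-b$ has value one. This is exactly the configuration your fallback tried to exclude; the paper exploits it instead. Your completed direct argument is longer, but within the case $r\ge d$ it identifies the value-one option explicitly: $x-a$ when $s<a$, and $x-a-b$ when $s\ge a$.
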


\begin{proof}
We proceed by strong induction on position size. Assume that every position $x < v_n$ has nim-value $i$ if and only if $x \in \W_i$. For the ``anti-collision'' part, Lemma~\ref{lem:anticollision} applies to $\W_2$ as a translate of $\W_0$: no move connects two elements of $\W_2$.

Let us verify ``reachability''. By Lemma~\ref{lem:2b} and \eqref{eq:vnbge0}, $v_n - b = w_{n+\delta} - 2b \in \W_0 \cup \W_1$.  If $v_n - b \in \W_1$, then, by definition of $\W_1, v_n-b\ge a$, and thus, by Ferguson pairing,  $(v_n-b)-a \in \W_0$, with value zero, by induction. Otherwise, if $v_n - b \in \W_0$ then by induction, the value is zero. 

Thus, it suffices to find an option of $v_n$ to value one, given that 
\begin{align}\label{eq:vnbW0}
v_n - b \in \W_0. 
\end{align}
Let us study the option $v_n-a$.  By \eqref{eq:vna}, $v_n - a \notin \W_0$. Anti-collision of $\W_2$, by induction, implies $v_n - a \not\in \W_2$. So, by induction, $v_n-a $ has nim-value either one or three. If $v_n - a \in \W_1$  then it yields an option of value one by induction, and we are done. So, suppose position 
\begin{align}\label{eq:vnaW3}
v_n-a\in\W_3.
\end{align}
 We claim that this implies that there is an option $v_n-a-b\ge 0$ of value one (by induction). The bounding away from zero is clear by Lemma~\ref{lem:boundary}. Moreover, note that, by the assumption \eqref{eq:vnaW3}, induction and anti-collision, the position $(v_n-a)-b$ cannot have value three. And $v_n-(a+b)$ cannot have value two by induction and anti-collision of $\W_2$ (implied by Lemma~\ref{lem:anticollision}). Moreover it cannot have value zero, by anti-collision, since this is the case \eqref{eq:vnbW0}, where we assume that $v_n-b\in \W_0$. But then, since all other values are ruled out, the option $v_n-b-a$ must have value one.
\end{proof}


The second result of this section concerns the contribution of the set $\W_3$. We will prove that all those positions  have nim-value three. Recall that, by our convention, $\W_0-\delta\subset \mathbb N$. Note that $w_n-\delta \ge 0$ if and only if $n\ge a$. Let us begin with a technical reachability lemma, which concerns a special class of positions in $\W_3$. In Figure~\ref{fig:winning_model}, we have depicted the main idea of the proof.
 
 \begin{lemma}\label{lem:shift}
Let $n = qa + r\ge a$ with $0 \leq r < a$, and consider the position $w_n-\delta$. Given the provisos $r < d:=\delta-a$ and $w_n - \delta \notin \W_0$, the option $(w_n - \delta) -b   \in \W_0$.
\end{lemma}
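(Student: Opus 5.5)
The plan is to exhibit explicitly the element of $\W_0$ that we land on. We have $(w_n-\delta)-b = w_n-(2\delta+a)$, and we look for it in the form $w_m$ with $m=n-d$. As in the proof of Lemma~\ref{lem:anticollision}, write $g(n,m)=\ell+a\alpha+b\beta$ with $\ell=n-m$, $\alpha=A(n)-A(m)$, $\beta=B(n)-B(m)$. With $\ell=d$ we get
\[
w_n-w_{n-d}=d+a\alpha+b\beta .
\]
Since $d+a+b=d+2a+\delta=2\delta+a$, it suffices to show $\alpha=1$ and $\beta=1$ for $m=n-d$.

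\emph{Step 1 ($\alpha=1$).} Since $0<d<a$, we have $\alpha=\lfloor n/a\rfloor-\lfloor (n-d)/a\rfloor\in\{0,1\}$. With $n=qa+r$ we get $n-d=qa+(r-d)$. The proviso $r<d$ gives $r-d\in[-a,0)$, because $d<a$. Hence $\lfloor (n-d)/a\rfloor=q-1$ and $\alpha=1$. Note also $n\ge a>d$, so $m=n-d\ge 0$ is a legitimate index.

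\emph{Step 2 ($\beta=1$), the main point.} Since $0<d<\delta$, we again have $\beta\in\{0,1\}$, so we only need to exclude $\beta=0$. Here the second proviso enters. Suppose $\beta=0$. Then, by Step 1,
\[
w_n-w_{n-d}=d+a=\delta,
\]
that is, $w_n-\delta=w_{n-d}\in\W_0$, which contradicts the hypothesis $w_n-\delta\notin\W_0$. Hence $\beta=1$. (Equivalently, writing $n\equiv s \pmod{\delta}$, the hypothesis forces $s<d$, so the window $(n-d,n]$ contains a multiple of $\delta$.)

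\emph{Conclusion.} Combining the two steps gives $w_n-w_{n-d}=d+a+b=2\delta+a$. Therefore
\[
(w_n-\delta)-b=w_{n-d}\in\W_0 .
\]
This is also nonnegative, consistent with Lemma~\ref{lem:boundary}.

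I expect the only subtle point to be recognizing that the hypothesis $w_n-\delta\notin\W_0$ is exactly the dichotomy on $\beta$ for the same shift $m=n-d$. Once the candidate $m=n-d$ is guessed, both steps are short floor-function computations like those in Lemmas~\ref{lem:anticollision} and~\ref{lem:2b}.
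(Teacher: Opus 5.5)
Your proof is correct and follows the paper's argument. You take the candidate index $m=n-d$ and get $\alpha=1$ from $r<d$. You then exclude a zero $\delta$-floor difference, since it would give $w_n-\delta=w_{n-d}\in\W_0$. The only cosmetic difference is that you argue directly on $\beta\in\{0,1\}$, whereas the paper phrases the same step through the residue $s$ of $n$ modulo $\delta$ (showing $s<d$); the two are equivalent.
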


\begin{proof}
We set out to prove that, given the provisos in the statement, then $w_n - w_{n-d} = 2b - a$, which implies the lemma. 

The proof is by way of contradiction, and to this purpose we define another representation of the index $n = p\delta +s$, with $0\le s<\delta$. We first demonstrate that, given the two provisos, then $s<d$. Suppose for contradiction that $s \geq d$. 
Since $n-d= p\delta+s-d$, then no multiple of $\delta$ lies in the interval $(n-d,n]$, namely $p\delta\le n-d\le n=p\delta+s<(p+1)\delta$. Hence  
\begin{align}\label{eq:zero}
\left\lfloor \frac{n}{\delta} \right\rfloor - \left\lfloor \frac{n-d}{\delta} \right\rfloor = 0.
\end{align}
Since $r < d < a$, the interval $(n-d, n]$ contains exactly one 
multiple of $a$, namely $qa$; this follows since $n-d=qa+r-d <qa \leq n = qa + r$,  
as $r < d$. Thus 
\begin{align}\label{eq:aone}
\left\lfloor \frac{n}{a} \right\rfloor - \left\lfloor \frac{n-d}{a} \right\rfloor = 1.
\end{align}
Altogether \eqref{eq:zero} and \eqref{eq:aone} give
\begin{align*}
w_n - w_{n-d} &= d + a \cdot 1 + b \cdot 0\\ &= d + a\\ &= \delta,
\end{align*}
and hence $ w_n - \delta = w_{n-d} \in \W_0$, contradicting the proviso $w_n - \delta \notin \W_0$. 
Thus $s < d$.

Since $s < d$, then $s+a<\delta$. Thus, 
\begin{align*}
\lfloor (n-d)/\delta \rfloor &= \lfloor (q\delta +s-\delta+a)/\delta \rfloor\\
&= \lfloor (q\delta +s+a)/\delta \rfloor-1\\ 
&= \lfloor n/\delta \rfloor -1, 
\end{align*}
and hence  $\lfloor n/\delta \rfloor - \lfloor (n-d)/\delta \rfloor = 1$. This will contribute the term 
$b$ to $w_n - w_{n-d}$, and by \eqref{eq:aone}, 
$\lfloor n/a \rfloor - \lfloor (n-d)/a \rfloor = 1$, which contributes the amount $a$. Altogether, we get 
\begin{align*}
w_n - w_{n-d} &= d + a \cdot 1 + b \cdot 1\\ 
&= \delta + b \\
&= 2b - a,
\end{align*}
and hence $w_{n-d} = w_n - 2b + a \in \W_0$.
\end{proof}

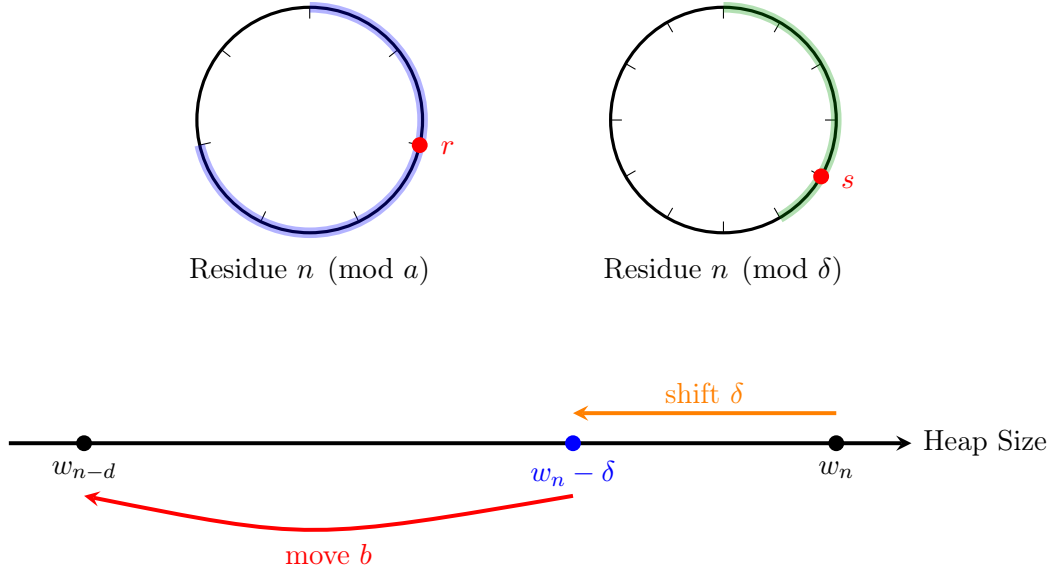
\begin{figure}
\begin{tikzpicture}[>=stealth]

    \begin{scope}[shift={(2.5,-1.2)}]
        \draw[line width=1.2pt] (0,0) circle (1.5cm);
        \foreach \k in {0,1,2,3,4,5,6}
            \draw (90-\k*360/7:1.35) -- (90-\k*360/7:1.5);

        \draw[line width=4pt, blue, opacity=0.3]
            (90:1.5) arc (90:-167.1:1.5);

        \fill[red] (90-2*360/7:1.5) circle (3pt);
        \node[red, right] at (90-2*360/7:1.65) {$r$};
        \node at (0,-2) {Residue $n \pmod{a}$};
    \end{scope}

    \begin{scope}[shift={(8,-1.2)}]
        \draw[line width=1.2pt] (0,0) circle (1.5cm);
        \foreach \k in {0,1,...,11}
            \draw (90-\k*30:1.35) -- (90-\k*30:1.5);

        \draw[line width=4pt, green!60!black, opacity=0.3]
            (90:1.5) arc (90:-60:1.5);

        \fill[red] (90-4*30:1.5) circle (3pt);
        \node[red, right] at (90-4*30:1.65) {$s$};
        \node at (0,-2) {Residue $n \pmod{\delta}$};
    \end{scope}

    \begin{scope}[shift={(-0.5,-5.5)}]
        \draw[->, line width=1.5pt] (-1,0) -- (11,0) node[right] {Heap Size};

        \fill (0,0)   circle (3pt) node[below=3pt] {$w_{n-d}$};
        \fill (10,0)  circle (3pt) node[below=3pt] {$w_n$};
        \fill[blue] (6.5,0) circle (3pt) node[below=3pt] {$w_n-\delta$};

        \draw[->, orange, line width=1.5pt] (10,0.4) -- (6.5,0.4);
        \node[orange, above] at (8.25,0.4) {shift $\delta$};

        \draw[->, red, line width=1.5pt]
            (6.5,-0.7) .. controls (3,-1.3) .. (0,-0.7);
        \node[red, below] at (3.25,-1.2) {move $b$};
    \end{scope}

\end{tikzpicture}
\caption{The Winning Model. The picture requires $w_n-\delta \not\in \W_0$. Zero is at 12 o'clock modulo $a$ or $\delta$ respectively; the wheels have $a=7$ and 
$\delta=12$ equally spaced ticks, respectively. The colored arcs span $d=5$ units on each wheel,
representing the zones of sufficently small residues. The red dots show residues $r=2$ and $s=4$, both satisfying $r,s<d$. Since both residues jump across the north pole when the index shifts by $d$, the bracket
expression arithmetic produces a combined heap change of $d+a+b=\delta+b$, which suffices to reach a \P-position by
subtracting $b$ from $w_n-\delta$.}\label{fig:winning_model}
\end{figure}

\begin{lemma}\label{lem:three}
Every element of $\W_3$ has nim-value three.
\end{lemma}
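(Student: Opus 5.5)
We argue by strong induction on position size, in the same framework as the proof of Lemma~\ref{lem:two}. The inductive hypothesis is that every position $y<x$ has nim-value $i$ if and only if $y\in\W_i$. Fix $x=w_n-\delta\in\W_3$. By Lemma~\ref{lem:boundary}, $n\ge\delta$ and $x\ge a+b$, so all three options $x-a$, $x-b$ and $x-a-b$ exist. Since three options can realise at most three distinct values and we need the values $0,1,2$ all present, anti-collision is automatic once reachability holds: the options must carry exactly the values $\{0,1,2\}$, so $x$ has value $3$. Hence it suffices to show that each of the values $0,1,2$ occurs among the options.

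The value-two option comes for free. We have $x-a=w_n-\delta-a=w_n-b$, and since $v_m=w_{m+\delta}-b$, this gives $x-a=v_{n-\delta}\in\W_2$, which has value two by Lemma~\ref{lem:two} (or by induction). The option $x-a$ is therefore used up by value two. Values zero and one must then come from $x-b$ and $x-a-b$.

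We split according to $n=qa+r$. If $r<d$, Lemma~\ref{lem:shift} gives $x-b\in\W_0$ (the proviso $x\notin\W_0$ holds by the definition of $\W_3$). We then need $x-a-b$ to have value one. Observe that $x-a-b=(x-b)-a$ and $x-b\in\W_0$, so by anti-collision $x-a-b\notin\W_0$. Also $x-a-b\notin\W_2$, since $x-a\in\W_2$ and anti-collision holds for $\W_2$. Finally, $x-a-b\notin\W_3$: here we use that $x\in\W_3$ together with a difference of $a+b$ between two elements of $\W_0-\delta$, which translates back to a collision in $\W_0$, contradicting Lemma~\ref{lem:anticollision}. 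By induction $x-a-b$ therefore has value one.

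If $r\ge d$, we use Lemma~\ref{lem:2b}, which gives $w_n-2b=w_{n-\delta}$. Hence $x-b=w_n-\delta-b=w_{n-\delta}+b-\delta=w_{n-\delta}+a\in\W_1$, and $x-a-b=w_{n-\delta}\in\W_0$. So values one and zero are both present, with no induction needed.

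The main obstacle is the $r<d$ case, specifically excluding the value three for $x-a-b$. The key point is that $\W_3\subset\W_0-\delta$ is itself a translate of $\W_0$, so anti-collision of $\W_0$ transfers to $\W_3$. Once that is checked, the argument closes exactly as in Lemma~\ref{lem:two}. No counting is needed for this lemma; the density and count arguments are only needed later, for the converse direction, namely that every value-three position lies in $\W_3$.
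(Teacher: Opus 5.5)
Your proof is correct within the joint strong induction the paper also uses, where the hypothesis is ``value $i$ iff membership in $\W_i$'' for all positions below $x$. It follows the paper's outline:
\begin{itemize}
\item[(a)] $x-a=w_n-b\in\W_2$ supplies value two;
\item[(b)] the argument splits on $r<d$ versus $r\ge d$;
\item[(c)] the case $r\ge d$, and the value-zero option for $r<d$ via Lemma~\ref{lem:shift}, are handled exactly as in the paper.
\end{itemize}

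The one real divergence is the value-one option when $r<d$. You obtain it by elimination, ruling out $\W_0$, $\W_2$ and $\W_3$ through anti-collision. The paper reads it off directly: $x-a-b=w_n-2b$, and Lemma~\ref{lem:2b} with $r<d$ gives $w_n-2b=w_{n-\delta}+a\in\W_1$. This is the same lemma you already invoke for $r\ge d$, just its other branch.

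The direct route is shorter and, at that step, needs only the forward direction (elements of $\W_1$ have value one). Your elimination needs the converse direction for $x-a-b$: a smaller position of value $0$, $2$ or $3$ must lie in $\W_0$, $\W_2$ or $\W_3$. Equivalently, $x-a-b$ must lie in some $\W_j$, which is exactly the covering statement supplied by the counting in Section~\ref{sec:numtheo} and Corollary~\ref{cor:a2}. Inside the joint induction this dependence is legitimate; Lemma~\ref{lem:two} argues by the same elimination. But your closing claim that ``no counting is needed for this lemma'' does not hold for your version of the argument. Similarly, ``with no induction needed'' in the case $r\ge d$ is loose: the values of $w_{n-\delta}$ and $w_{n-\delta}+a$ are still known only by induction.

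On the other hand, your pigeonhole observation is a slightly cleaner way to dispose of anti-collision than the paper's. Three options realising $0,1,2$ leave no room for an option of value three. The paper instead appeals to $\W_3$ lying in a shift of $\W_0$, which needs the converse direction for value three to conclude that no option has value three.
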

\begin{proof}
Recall that $\W_3:=(\W_0-\delta)\setminus \W_0$. Anti-collision holds since $\W_3$ is a subset of a linear shift of $\W_0$. Further, note that $\W_0-\delta=\W_0-b+a$, and hence, by subtracting $a$ from a position $x\ge a$ in $\W_3$, by induction, a player can move to nim-value two. Let us consider reachability to the smaller nim-values. Let $n\equiv r\pmod a$, and let $d=\delta-a$. By observing also Lemma~\ref{lem:boundary}, we record the following claims: 
\begin{itemize}
\item[(i)] If $w_n-\delta \in \W_3$ with $r<d$ then, subtracting $b$ gives value zero, while subtracting $(a+b)$ gives value one.
\item[(ii)] If $w_n-\delta \in \W_3$ with $r\ge d$ then, subtracting $b$ gives value one, while subtracting $(a+b)$ gives value zero.
\end{itemize}

In case (i), we use Lemma~\ref{lem:2b} to see that the value one option is correct, by induction. Indeed, $w_n-b+a-(b+a)=w_n-2b=w_{n-\delta}+a\in \W_1$. Therefore, it suffices to prove that, given the provisos $r<d$ and $w_n-\delta=w_n-b+a\not \in \W_0$, the position \begin{align}\label{eq:wn2ba}
w_n-2b+a=w_{n-d},
\end{align}
which by induction implies value zero. By Lemma~\ref{lem:shift}, equation \eqref{eq:wn2ba} holds, and we are done with this case.

In case (ii),  we reach value zero by subtracting $a+b$ from $w_n-\delta$. Indeed, by invoking the proviso $r\ge d$, Lemma~\ref{lem:2b} gives that $w_n-2b=w_{n-\delta}\in \W_0$ has value zero, by induction. Moreover, by instead subtracting $b$, Ferguson pairing gives value one at heap size $w_n-2b+a=w_{n-\delta}+a\in \W_1$, by induction, and since $a=\min S$. 
\end{proof}

We now understand the nim-values of the relevant sets. It remains to show that, in fact, $\W_0\sqcup\W_1\sqcup\W_2\sqcup\W_3=\N$, i.e.\ these four sets exhaust all positions. To prove this, it suffices to show that the number of positions of $\W_3$, in one heap-period, matches 
\begin{align}\label{eq:a2}
    a^2 = (3\delta +a)a - 3a\delta. 
\end{align} 
These are the remaining positions in one heap-period, given that the three sets $\W_0, \W_1$ and $\W_2$ have been accounted for. Let us explain. Obviously, the number of positions in a period of $\W_0$, or any translate of it, is $a\delta$ (this is the index-period). From this, a direct calculation from the closed form $w_n$ shows that the period length of the \P-positions (the heap-period) is:
\begin{align}\label{eq:adelta}
  w_{n+a\delta} - w_n \notag
  &=
  a\delta
  +
  a\Bigl(\Bigl\lfloor\frac{n+a\delta}{a}\Bigr\rfloor - \Bigl\lfloor\frac{n}{a}\Bigr\rfloor\Bigr)
  +
  (a+\delta)\Bigl(\Bigl\lfloor\frac{n+a\delta}{\delta}\Bigr\rfloor - \Bigl\lfloor\frac{n}{\delta}\Bigr\rfloor\Bigr)\\
  &=
  (3\delta + a)a. 
\end{align}
Note that, since $a\delta$ is a multiple of both $a$ and $\delta$, the two floor differences are $\delta$ and $a$, respectively. The following section isolates the core number-theoretic mechanism in establishing the count in \eqref{eq:a2}.

\section{The arithmetic structure of the $\delta$-collisions}\label{sec:numtheo}

In this section we isolate the arithmetic mechanism governing the $\delta$-collisions in the primitive quadratic regime. That is, we study the set $(\W_0-\delta) \cap \W_0$. We do this, because it will determine the size of $(\W_0-\delta)\setminus\W_0$. The reader is invited to study the three examples we provide in Section~\ref{sec:numexa}, in parallel with the technical details in this section. 

Recall that in this regime we assume $a<\delta<2a,   \gcd(a,\delta)=1,   d:=\delta-a$, 
and, for $n\ge 0$, we let 
\[
  w_n = n+ a\Bigl\lfloor\frac{n}{a}\Bigr\rfloor+(a+\delta)\Bigl\lfloor\frac{n}{\delta}\Bigr\rfloor.
\]
We write $\W_0=\{w_n:n\ge 0\}$ as before. For a subset $X\subset\mathbb N_0$, we write $|X|_h$ for the number
of elements of $X$ whose heaps lie in a single heap-period; in the primitive quadratic case this is the same as counting indices in a single index-period of length $a\delta$.

We are interested in the intersections of $\W_0$ with its $\delta$-shift
$\W_0-\delta=\{w_n-\delta:n\ge a\}$. Equivalently, we count all pairs $(m,n)$ with $n>m$ such that $w_n-w_m=\delta$.

Let the collision set be 
\begin{align}\label{eq:col}
C := \{\, w_n : \exists m<n \text{ with } w_n-w_m=\delta \,\}
= (\W_0-\delta)\cap \W_0 .
\end{align}
Let us state the main result of this section. 
\begin{thm}[$\delta$-Collision Counting]\label{thm:deltacoll}
Assume $a<\delta<2a$ and $\gcd(a,\delta)=1$. 
Then, in one heap-period of length $h=(3\delta+a)a$, the number of
collisions is  $|C|_h = ad$.
\end{thm}

We derive this result from a more detailed description of how many $\delta$-collisions each $(a+1)$-gap can take part in. To make the governing ideas more explicit, we divide the proof into four subsections. 

\subsection{Gaps and residue classes}
The arithmetic structure of the $\delta$-collisions relies on a regularity property of the {\em gap sequence} of the candidate $\mathcal P$-positions. 
Recall, for $n\ge 0$, 
\[
w_n = n + a\Big\lfloor\frac{n}{a}\Big\rfloor + b\Big\lfloor\frac{n}{\delta}\Big\rfloor.
\]
Define the gap sequence, for $n\ge 0$,
\[
g_n := w_{n+1}-w_n .
\]

The key observation is that the gap sequence has a very simple arithmetic
structure. Since the definition of $w_n$ involves only the two floor
functions $\lfloor n/a \rfloor$ and $\lfloor n/\delta \rfloor$, the gap
$g_n$ can change only when $n+1$ crosses a multiple of $a$
or a multiple of $\delta$. Consequently the gap sequence takes only four
possible values and is completely determined by the residue classes of
$n+1$ modulo $a$ and $\delta$. We record this observation explicitly.

\begin{lemma}[Gap Regularity]\label{lem:gapreg}

The gap sequence satisfies 
\[
g_n = 1 + aI_a(n) + bI_\delta(n),
\]
where
\[
I_a(n)=
\begin{cases}
1,& n+1\equiv 0 \pmod a,\\
0,& \text{otherwise},
\end{cases}
\qquad
I_\delta(n)=
\begin{cases}
1,& n+1\equiv 0 \pmod\delta,\\
0,& \text{otherwise}.
\end{cases}
\]

In particular the gap sequence is periodic with index-period $a\delta$, and the
occurrences of the special gaps $a+1$ and $b+1$ are uniquely determined
by the residue classes of $n+1$ modulo $a$ and $\delta$, respectively.
\end{lemma}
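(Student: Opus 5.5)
The plan is to compute $g_n$ directly from the closed form: subtract $w_n$ from $w_{n+1}$ term by term and identify each floor difference as an indicator. By definition,
\[
g_n = w_{n+1}-w_n = 1 + a\Bigl(\Bigl\lfloor\tfrac{n+1}{a}\Bigr\rfloor-\Bigl\lfloor\tfrac{n}{a}\Bigr\rfloor\Bigr) + b\Bigl(\Bigl\lfloor\tfrac{n+1}{\delta}\Bigr\rfloor-\Bigl\lfloor\tfrac{n}{\delta}\Bigr\rfloor\Bigr),
\]
so it suffices to evaluate the two floor differences.

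The key step is the elementary fact that, for any positive integer $c$ and any $n\ge 0$,
\[
\Bigl\lfloor\tfrac{n+1}{c}\Bigr\rfloor-\Bigl\lfloor\tfrac{n}{c}\Bigr\rfloor = 1 \iff c \mid n+1,
\]
and the difference is $0$ otherwise. To see this, write $n=qc+t$ with $0\le t<c$. Then $n+1=qc+(t+1)$. If $t+1<c$, the quotient stays $q$. If $t+1=c$, which is exactly the case $c\mid n+1$, the quotient becomes $q+1$. Applying this with $c=a$ gives $I_a(n)$, and applying it with $c=\delta$ gives $I_\delta(n)$. This yields the displayed formula $g_n=1+aI_a(n)+bI_\delta(n)$.

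For the second part, note that $I_a(n)$ depends only on $n+1 \bmod a$ and $I_\delta(n)$ depends only on $n+1 \bmod \delta$. Since $a\delta$ is a common multiple of $a$ and $\delta$, both indicators are invariant under $n\mapsto n+a\delta$, so $g_{n+a\delta}=g_n$. By $\gcd(a,\delta)=1$, the Chinese remainder theorem shows that the pair of residues of $n+1$ modulo $a$ and $\delta$ runs over all combinations exactly once in an index-period, so $a\delta$ is the minimal period. Strictly, only periodicity with period $a\delta$ is needed, and that holds without coprimality.

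The gap $g_n$ takes exactly the four values $1$, $a+1$, $b+1$, and $a+b+1$. The value $g_n=a+1$ occurs exactly when $a\mid n+1$ and $\delta\nmid n+1$. The value $g_n=b+1$ occurs exactly when $\delta\mid n+1$ and $a\nmid n+1$. These four values are pairwise distinct, because $1<a+1<b+1<a+b+1$. Hence the residues of $n+1$ modulo $a$ and modulo $\delta$ determine the gap, and conversely the gap value determines which of the two divisibilities hold.

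There is no real obstacle; the only point requiring care is the floor-difference identity, which is settled by the division-algorithm argument above.
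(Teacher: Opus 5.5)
Your proposal is correct and is essentially the paper's proof: expand $w_{n+1}-w_n$ termwise and identify each floor difference as the corresponding divisibility indicator, with the division-algorithm justification and the invariance under $n\mapsto n+a\delta$ added as routine detail. One small point in your optional aside: the CRT fact that each residue pair occurs once per $a\delta$ indices does not by itself show that $a\delta$ is the minimal period, because the gap only records whether each residue is zero; the correct reason is that any period of $(g_n)$ must be a period of both indicator sequences, whose minimal periods are $a$ and $\delta$, so it is a multiple of $\mathrm{lcm}(a,\delta)=a\delta$ --- though, as you say, minimality is not needed.
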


\begin{proof}
We compute
\[
g_n = w_{n+1}-w_n
= 1 + a\!\left(\Big\lfloor\frac{n+1}{a}\Big\rfloor-\Big\lfloor\frac{n}{a}\Big\rfloor\right)
  + b\!\left(\Big\lfloor\frac{n+1}{\delta}\Big\rfloor-\Big\lfloor\frac{n}{\delta}\Big\rfloor\right).
\]
The difference
\[
\Big\lfloor\frac{n+1}{a}\Big\rfloor-\Big\lfloor\frac{n}{a}\Big\rfloor
\]
equals $1$ exactly when $n+1$ is divisible by $a$, and $0$ otherwise; similarly for
the modulus $\delta$.  This yields the stated formula.
\end{proof}

By Lemma~\ref{lem:gapreg}, the set of the possible gap sizes is:
\begin{align}\label{eq:gapsizes}
  \{1,  a+1,  b+1, 
  b+a+1\}.
\end{align}

In particular,
\[
  g_n = a+1
  \quad\Longleftrightarrow\quad
  n+1 \equiv 0 \pmod a \
  \text{ and }\ 
  n+1 \not\equiv 0 \pmod\delta.
\]
Let $n+1=qa$ and reduce modulo $\delta$. Set $\rho \equiv qa \pmod{\delta}$. Since $\gcd(a,\delta)=1$, multiplication by $a$ is invertible modulo $\delta$. 

\begin{lemma}[Residue Permutation]\label{lem:permutation}
As $q$ runs through $1,\dots,\delta-1$, the residues $\rho \equiv qa \pmod{\delta}$ run bijectively through $\{1,\dots,\delta-1\}$. In particular, in one index-period of length $a\delta$ there are exactly $\delta-1$ gaps equal to $a+1$.
\end{lemma}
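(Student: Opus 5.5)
The argument has two parts: the bijectivity claim, which is pure modular arithmetic, and the count of $(a+1)$-gaps, which follows from it together with Lemma~\ref{lem:gapreg}.

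\textbf{Bijectivity.} Since $\gcd(a,\delta)=1$, multiplication by $a$ is a bijection of $\mathbb{Z}/\delta\mathbb{Z}$. It fixes the class $0$, because $qa\equiv 0\pmod\delta$ holds exactly when $\delta\mid q$. Hence it permutes the nonzero classes $\{1,\dots,\delta-1\}$. For $q\in\{1,\dots,\delta-1\}$ we have $\delta\nmid q$, so $\rho\neq 0$. Injectivity follows from cancellation: $q_1a\equiv q_2a$ gives $\delta\mid q_1-q_2$, and since $|q_1-q_2|<\delta$ this forces $q_1=q_2$. An injective map between two finite sets of the same size $\delta-1$ is a bijection.

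\textbf{Counting $(a+1)$-gaps.} Take one index-period, say $n\in\{0,1,\dots,a\delta-1\}$, so that $n+1$ ranges over $\{1,\dots,a\delta\}$. By Lemma~\ref{lem:gapreg}, $g_n=a+1$ exactly when $a\mid n+1$ and $\delta\nmid n+1$.
\begin{itemize}
\item The multiples of $a$ in this range are $n+1=qa$ with $q=1,\dots,\delta$, which gives $\delta$ candidates.
\item By the bijectivity statement, for $q=1,\dots,\delta-1$ we have $qa\not\equiv 0\pmod\delta$, so each of these is a genuine gap $a+1$.
\item For $q=\delta$ we have $n+1=a\delta\equiv 0\pmod\delta$, so this gap is $a+b+1$, not $a+1$.
\end{itemize}
This leaves exactly $\delta-1$ gaps equal to $a+1$. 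Since the gap sequence has index-period $a\delta$ by Lemma~\ref{lem:gapreg}, the count is the same for every window of $a\delta$ consecutive indices, so the choice of starting point does not matter.

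No step here is a real obstacle. The only point needing care is to treat the boundary multiple $q=\delta$ explicitly and to note that the count is independent of where the period window starts.
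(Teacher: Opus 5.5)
Your proof is correct and takes the same route as the paper, which likewise gets the permutation of the nonzero residues from the invertibility of multiplication by $a$ modulo $\delta$ when $\gcd(a,\delta)=1$. Your explicit count of the $\delta-1$ gaps equal to $a+1$, which discards the boundary multiple $q=\delta$ where the gap is $a+b+1$, spells out the second claim that the paper leaves implicit in its characterization of when $g_n=a+1$.
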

\begin{proof}
Since $\gcd(a,\delta)=1$, multiplication by $a$ is invertible modulo $\delta$. Hence the map $q \mapsto qa \pmod{\delta}$ permutes the nonzero residue classes. 
\end{proof}

Thus we can label each $(a+1)$-gap in a period by a unique residue $\rho\in\{1,\dots,\delta-1\}$.

\subsection{Collision windows and the parameters $L$ and $R$}

A \emph{collision window} is a stretch of consecutive gaps whose sum
equals $\delta=a+d$. Since $a<\delta<2a$, by \eqref{eq:gapsizes} any such collision window must contain exactly one
$(a+1)$-gap and exactly $d-1$ gaps of size one. Gaps greater than $\delta$ (i.e. $b+1$ and $a+b+1$) cannot appear inside a collision window; in fact, they terminate any consecutive run of $1$-gaps. Fix one particular $(a+1)$-gap. Let $L$ and $R$ denote the numbers of consecutive $1$-gaps immediately to its left and right, respectively; for an example, see Figure~\ref{fig:LR}.

\begin{lemma}\label{lem:LR}
Let the $(a+1)$-gap correspond to residue $\rho\in\{1,\dots,\delta-1\}$
as above. Then
\[
  L=\min(a-1,\rho-1),
  \qquad
  R=\min(a-1,\delta-\rho-1).
\]
\end{lemma}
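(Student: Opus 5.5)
The plan is to read off both runs directly from the Gap Regularity Lemma~\ref{lem:gapreg}. A gap $g_j$ equals $1$ exactly when $j+1$ is divisible by neither $a$ nor $\delta$. Each of the other gap sizes $a+1$, $b+1$, $a+b+1$ ends a run of $1$-gaps. So for each side I only need the distance from the fixed $(a+1)$-gap to the nearest index $j$ with $a\mid j+1$ or $\delta\mid j+1$.

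Fix the $(a+1)$-gap $g_n$, where $n+1=qa$ and $\rho\equiv qa\pmod\delta$ with $\rho\in\{1,\dots,\delta-1\}$. The condition $\rho\neq 0$ is exactly the requirement $\delta\nmid n+1$, so the gap is $a+1$ and not $a+b+1$.

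\emph{Left run.} Consider $g_{n-k}$ for $k\ge 1$, so $j+1=qa-k$.
\begin{itemize}
\item $a\mid qa-k$ iff $a\mid k$, and the least such positive $k$ is $k=a$.
\item $\delta\mid qa-k$ iff $k\equiv\rho\pmod\delta$, and since $1\le\rho\le\delta-1$ the least such positive $k$ is $k=\rho$.
\end{itemize}
Hence $g_{n-k}=1$ for $1\le k<\min(a,\rho)$, and $g_{n-\min(a,\rho)}\neq 1$. This gives $L=\min(a,\rho)-1=\min(a-1,\rho-1)$.

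\emph{Right run.} Consider $g_{n+k}$ for $k\ge1$, so $j+1=qa+k$.
\begin{itemize}
\item $a\mid qa+k$ first at $k=a$.
\item $\delta\mid qa+k$ iff $k\equiv-\rho\pmod\delta$, first at $k=\delta-\rho\in\{1,\dots,\delta-1\}$.
\end{itemize}
Therefore $R=\min(a,\delta-\rho)-1=\min(a-1,\delta-\rho-1)$.

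The only step needing care is the left boundary of the sequence, namely small $q$ where $n-k$ would become negative before a terminating gap is reached. I would handle it in one of two ways. The first is to extend $w$ (and thus the gaps) to all integers through the same formula, which is consistent with the index-periodicity $a\delta$ recorded in Lemma~\ref{lem:gapreg}; then count in an arbitrary full index-period, which is all Theorem~\ref{thm:deltacoll} requires. The second is to check the case $q=1$ directly. There $\rho=a$ because $a<\delta$, so the formula predicts $L=a-1$. Indeed the gaps $g_0,\dots,g_{a-2}$ are all $1$, since $w_j=j$ for $j<a$, and the run ends at the start of the sequence, preceded in the periodic extension by a $(b+a+1)$-gap.

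I expect no real obstacle. The main point is to justify that the first multiple of $\delta$ met on each side is at distance exactly $\rho$, respectively $\delta-\rho$, which holds because $\rho$ is a nonzero residue taken in $\{1,\dots,\delta-1\}$.
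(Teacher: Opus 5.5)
Your proof is correct and follows the same idea as the paper. On each side, the run of $1$-gaps is cut off by the nearest index $j$ with $a\mid j+1$ (at distance $a$) or $\delta\mid j+1$ (at distance $\rho$ on the left and $\delta-\rho$ on the right), and your version is more careful than the paper's informal wording: you derive these distances explicitly from Lemma~\ref{lem:gapreg} and check the boundary case $q=1$, where the left run is ended by the start of the sequence rather than by a large gap.
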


\begin{proof}
Between two successive multiples of $a$ there are exactly $a-1$ indices, and hence at most $a-1$ gaps of size $1$ appear before the next large gap. Thus, $L$ and $R$ will have size $a-1$ unless a larger gap appears before the next multiple of $a$. 

Measuring from the nearest multiple of $\delta$ to the left, the residue $\rho$ counts how many steps one is away from
that multiple. Thus there can be at most $\rho-1$ consecutive indices before reaching that multiple from the right, and at most $\delta-\rho-1$ indices before reaching the next multiple of $\delta$ to the right. Combining these two types of obstructions yields the stated minima.
\end{proof}

\begin{figure}[h]
\centering
\begin{tikzpicture}[scale=0.8]

  \coordinate (w0) at (0,0);
  \coordinate (w1) at (1,0);
  \coordinate (w2) at (2,0);
  \coordinate (w3) at (3,0);
  \coordinate (w4) at (8,0);
  \coordinate (w5) at (9,0);
  \coordinate (w6) at (17,0);

  \draw[-] (w0) -- (w5);

  \foreach \p/\lab in {
    w0/{w_{n-3}}, w1/{w_{n-2}}, w2/{w_{n-1}},
    w3/{w_{n}},   w4/{w_{n+1}}, w5/{w_{n+2}}, w6/{w_{n+3}}
  }{
    \fill (\p) circle (2pt);
    \node[below=4pt,scale=0.7] at (\p) {$\lab$};
  }

  \draw (w0) -- (w1) node[midway,above=2pt,scale=0.8] {$1$};
  \draw (w1) -- (w2) node[midway,above=2pt,scale=0.8] {$1$};
  \draw (w2) -- (w3) node[midway,above=2pt,scale=0.8] {$1$};

  \draw[very thick] (w3) -- (w4)
    node[midway,above=2pt] {$a+1=5$};

  \draw (w4) -- (w5) node[midway,above=2pt,scale=0.8] {$1$};

  \draw[dashed] (w5) -- (w6);
  \node[above=2pt,scale=0.8] at ($(w5)!0.5!(w6)$) {$> \delta$};

  \draw[decorate,decoration={brace,amplitude=4pt,mirror}]
    ($(w0)+(0,-0.65)$) --
    ($(w3)+(0,-0.65)$)
    node[midway,below=6pt,scale=0.8] {$L=3$};

  \draw[decorate,decoration={brace,amplitude=4pt,mirror}]
    ($(w4)+(0,-0.65)$) --
    ($(w5)+(0,-0.65)$)
    node[midway,below=6pt,scale=0.8] {$R=1$};

  \draw[decorate,decoration={brace,amplitude=4pt},blue,thick]
    ($(w1)+(0,1)$) --
    ($(w4)+(0,1)$)
    node[midway,above=6pt,scale=0.8,blue] {$1+1+5=\delta$};

  \draw[decorate,decoration={brace,amplitude=4pt},red,thick]
    ($(w2)+(0,2.0)$) --
    ($(w5)+(0,2.0)$)
    node[midway,above=6pt,scale=0.8,red] {$1+5+1=\delta$};

\end{tikzpicture}
\caption{Consider $a=4$ and $\delta=7$. 
The gaps along the number line have sizes $1,1,1,5,1,12$ where $12>\delta$. 
The central gap of size $a+1=5$ is flanked by $L=3$ consecutive one-gaps on the left
and $R=1$ on the right; the large dashed gap ``$>\delta$'' prevents extending collision
windows further right. The blue and red braces show the two possible collision windows of
length $\delta=7$.}\label{fig:LR}
\end{figure}
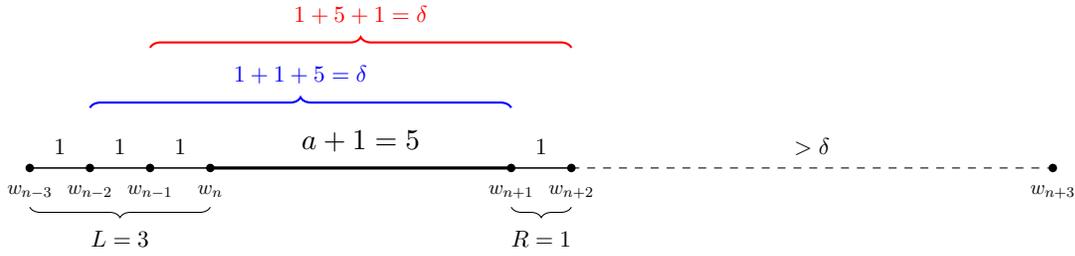

Let us denote by $c(\rho)$ the number of collision windows belonging to the $(a+1)$-gap at residue $\rho$. It must contain,  $0\le x\le d-1$ one-gaps from the left and $0\le d-1-x\le d-1$ one-gaps from the right, because $a+1+x+d-1-x=\delta$. 
By Lemma~\ref{lem:LR}, the admissible such $x$ values are those satisfying both 
\begin{equation}\label{eq:x-constraints}
  0\le x\le L\
  \text{ and }\ 
  0\le d-1-x\le R.
\end{equation} 

\subsection{Three residue regions and the distribution of $c(\rho)$}\label{sec:}
Recall that $\rho\equiv qa\pmod \delta$, where $q$ runs through $1,\ldots, \delta-1$. We now compute the number $c(\rho)$ of collision windows belonging to the $\rho$th $(a+1)$-gap explicitly. The numbers are computed by using the membership of the residue $\rho$ of the three relevant regions, two of which are restricted to the left and right, respectively. As before, let $d=\delta-a$ and recall that we are in the quadratic regime where $d<a$.

\medskip
\noindent
\textbf{Left-restricted:} $1\le \rho\le d$. Here $\rho-1\le d-1<a-1$, so
\[
  L=\min(a-1,\rho-1)=\rho-1,
  \qquad
  R=\min(a-1,\delta-\rho-1)= a-1.
\]
Thus the right side does not restrict. The constraints \eqref{eq:x-constraints} reduce to $0\le x\le \rho-1$. 
Hence $x$ can take the $\rho$ values $0,1,\dots,\rho-1$, and
\begin{align}\label{eq:left}
  c(\rho)=\rho.
\end{align}

\medskip
\noindent
\textbf{Unrestricted:} $d<\rho<a$. Now $\rho-1\ge d$ and $\delta-\rho-1\ge d-1$, so both $L$ and $R$ are at
least $d-1$, and the only restriction on $x$ comes from the requirement
$0\le d-1-x$. Hence $x$ can range freely from $0$ to $d-1$, and
\begin{align}\label{eq:middle}
  c(\rho)=d.
\end{align}

\medskip
\noindent
\textbf{Right-restricted:} $a\le \rho\le \delta-1$. 
We have 
\begin{align*}
  L&=\min(a-1,\rho-1)\\
  &=a-1.
\end{align*}
and
\begin{align*}
  R&=\min(a-1,\delta-\rho-1)\\
   &=\delta-\rho-1,
\end{align*}
since $\delta-\rho-1\le d-1<a-1$. Thus the constraints \eqref{eq:x-constraints} become
\[
  0\le d-1-x\le \delta-\rho-1.
\]
Which simplifies to
\[
  \rho-a\le x\le d-1, 
\]
which gives 
\begin{align}\label{eq:right}
c(\rho)=\delta-\rho.
\end{align}
By collecting the three regions, we obtain the following refinement.

\begin{lemma}[Primitive Collision Distribution]\label{lem:colldist}
Let $a<\delta<2a$ with $\gcd(a,\delta)=1$ and $d=\delta-a$. Then 
\[
  \sum_{\rho=1}^{\delta-1} c(\rho)
  =
  \sum_{k=1}^{d}2k + (a-d-1)d
  =
  ad.
\]
\end{lemma}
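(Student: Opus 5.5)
The plan is to sum the three regional formulas \eqref{eq:left}, \eqref{eq:middle} and \eqref{eq:right} over the residues. By Lemma~\ref{lem:permutation}, each $\rho\in\{1,\dots,\delta-1\}$ labels exactly one $(a+1)$-gap in an index-period. The regions $\{1,\dots,d\}$, $\{d+1,\dots,a-1\}$ and $\{a,\dots,\delta-1\}$ are disjoint and together exhaust $\{1,\dots,\delta-1\}$. They are nonempty in the right way because $0<d<a$ in the primitive quadratic regime, so the middle region contains exactly $a-1-d\ge 0$ residues. Hence $\sum_\rho c(\rho)$ splits into three sums.

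The left-restricted region contributes $\sum_{\rho=1}^{d}\rho$, by \eqref{eq:left}. For the right-restricted region, substitute $k=\delta-\rho$. As $\rho$ runs over $a,\dots,\delta-1$, the new variable $k$ runs over $1,\dots,\delta-a=d$, so \eqref{eq:right} gives a second copy of $\sum_{k=1}^{d}k$. This symmetry $\rho\mapsto\delta-\rho$ between the two restricted regions is the one point that needs checking: the right region has exactly $d$ elements, and its values $c(\rho)=\delta-\rho$ are exactly $1,\dots,d$. Pairing the two copies gives $\sum_{k=1}^d 2k$. The middle region contributes $(a-1-d)\cdot d$, by \eqref{eq:middle}. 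This establishes the first equality in the statement.

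For the second equality, compute $\sum_{k=1}^d 2k=d(d+1)$. Then
\[
d(d+1)+(a-d-1)d = d\bigl((d+1)+(a-d-1)\bigr)=ad .
\]
The only real obstacle lies upstream, in the correctness of the three regional counts for $c(\rho)$ at the boundary residues $\rho=d$ and $\rho=a$. At $\rho=d$, the left formula gives $c=d$, consistent with the middle formula. At $\rho=a$, the right formula gives $c=\delta-a=d$, again consistent. So the partition is unambiguous, and the computation above is purely arithmetic.
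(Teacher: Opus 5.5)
Your proposal is correct and follows the paper's proof essentially step for step. Like the paper, you split $\{1,\dots,\delta-1\}$ into the left-restricted, unrestricted and right-restricted regions, read off the contributions $1,\dots,d$, then $a-d-1$ copies of $d$, then $d,\dots,1$ (via $k=\delta-\rho$), and sum to get $d(d+1)+(a-d-1)d=ad$; your explicit check that the regions have sizes $d$, $a-1-d$, $d$ and exhaust $\{1,\dots,\delta-1\}$ is a small detail the paper leaves implicit.
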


\begin{proof}
Since $\gcd(a,\delta)=1$, multiplication by $a$ permutes the nonzero residues modulo $\delta$. Hence as $q$ runs through $1,\ldots,\delta-1$, the residues $\rho \equiv qa \pmod{\delta}$ run through the set $\{1,2,\ldots,\delta-1\}$ exactly once. Thus it suffices to sum $c(\rho)$ for $\rho =1,\ldots,\delta-1$. From the three cases above we have
\[
c(\rho)=
\begin{cases}
\rho, & 1\le \rho\le d,\\
d, & d<\rho<a,\\
\delta-\rho, & a\le \rho\le\delta-1.
\end{cases}
\]

In the left region ($1\le \rho \le d$) the values are $1,2,\ldots,d$. In the middle region ($d<\rho<a$) there are $a-d-1$ residues and each contributes $d$. In the right region ($a\le \rho\le\delta-1$), the values $\delta-\rho$ run through 
$d,d-1,\ldots,1$. Thus the multiset of values is
\[
\{c(\rho):1\le \rho\le\delta-1\}
=
\{1,2,\ldots,d,
\underbrace{d,\ldots,d}_{a-d-1\ \text{times}},
d,\ldots,2,1\}.
\]

Summing these contributions gives
\[
\sum_{\rho = 1}^{\delta-1} c(\rho)
=
\sum_{k=1}^{d}2k + (a-d-1)d
=
ad.
\]
See Figure~\ref{fig:collision_distribution} for a geometric interpretation. 
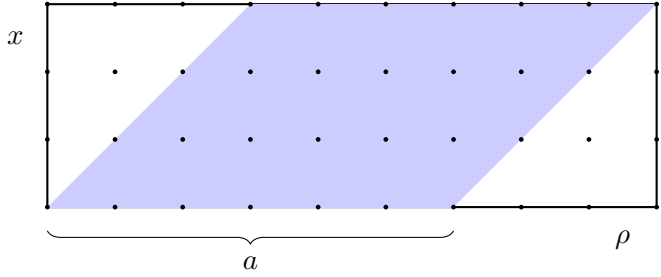
\begin{figure}[h]
\centering
\begin{tikzpicture}[scale=0.9]
\pgfmathtruncatemacro{\a}{7}
\pgfmathtruncatemacro{\d}{4}
\pgfmathtruncatemacro{\delta}{\a+\d}
\pgfmathtruncatemacro{\dminus}{\d-1}
\pgfmathtruncatemacro{\deltaminus}{\delta-1}

\draw[thick] (1,0) rectangle (\deltaminus,\dminus);

\begin{scope}
\clip (1,0) rectangle (\deltaminus,\dminus);
\fill[blue!20]
  (1,0) --
  (\deltaminus,\deltaminus-1) --
  (\deltaminus,\deltaminus-\a) --
  (1,1-\a) --
  cycle;
\end{scope}

\foreach \r in {1,...,\deltaminus}
  \foreach \x in {0,...,\dminus}
    \fill (\r,\x) circle (1pt);

\node[below] at ({(\delta)-1.5},-0.2) {$\rho$};
\node[left] at (0.8,{(\dminus)-.5}) {$x$};
\draw[decorate,decoration={brace,mirror, amplitude=5pt}]
  (1,-0.35) -- (\a,-0.35)
  node[midway,below=6pt] {$a$};

\end{tikzpicture}
\caption{The distribution of $c(\rho)$ across the three residue regions for $a < \delta < 2a$ and $d = \delta - a$. Each column $\rho \in \{1, \dots, \delta - 1\}$ contains exactly $c(\rho)$ admissible lattice points, where each point corresponds to a unique collision window containing the $(a+1)$-gap at residue $\rho$. The left triangle ($1 \le \rho \le d$) represents the left-restricted case where $c(\rho) = \rho$; the central rectangle ($d < \rho < a$) represents the unrestricted case where $c(\rho) = d$; and the right triangle ($a \le \rho \le \delta - 1$) represents the right-restricted case where $c(\rho) = \delta - \rho$. The total area (lattice point count) is $\sum_{\rho=1}^{\delta-1} c(\rho) = ad$.}\label{fig:collision_distribution}
\end{figure}
\end{proof}

\subsection{Proof of the $\delta$-collision counting theorem}

\begin{proof}[Proof of Theorem~\ref{thm:deltacoll}]
Each collision $w_n-w_m=\delta$ can be written uniquely as a sum of
consecutive gaps
\[
  \delta = g_m + g_{m+1} + \cdots + g_{n-1},
\]
so it corresponds to a unique collision window. Conversely, every collision window corresponds to a collision of this form.

In one index-period of length $a\delta$, the $(a+1)$-gaps are in bijection with residues $\rho\in\{1,\dots,\delta-1\}$, and for each residue $\rho$, $c(\rho)$ is the number of collision windows containing that gap. Thus the total number of collision windows (and therefore collisions) in one index-period equals
\[
  \sum_{\rho=1}^{\delta-1} c(\rho).
\]
By Lemma~\ref{lem:colldist} this sum equals $ad$,
which is the desired value.
\end{proof}

\section{Some numerical examples}\label{sec:numexa}
This section illustrates the arithmetic mechanism of Section~\ref{sec:numtheo} with three examples. 
The first shows the degenerate situation $d=1$, where every $(a+1)$-gap yields exactly one collision window. 
The second gives an explicit computation in the first non-trivial case $d=2$. The third demonstrates how the parameters $L$ and $R$ determine $c(\rho)$. 

\begin{example}
Let $a=3$ and $\delta=4$ (so $d=1$). Then one index-period has length $a\delta=12$. 
A direct computation from the closed form gives the gap sequence in one index-period as
\[
(g_n)=1,1,4,8,1,4,1,8,4,1,1,11.
\]
The gaps of size $a+1=4$ occur exactly $\delta-1=3$ times. Since $d=1$, a collision window
consists solely of a single $(a+1)$-gap and no additional $1$-gaps. Thus each such gap
contributes exactly one collision window, so $c(\rho)=1$ for all $\rho$ and
\[
\sum_\rho c(\rho)=3=ad.
\]
\end{example}
\begin{example}
Let $a=5$ and $\delta=7$ (so $d=2$). Then one index-period has length $a\delta=35$. 
From the closed form
\[
w_n
=
n
+
5\Big\lfloor\frac{n}{5}\Big\rfloor
+
12\Big\lfloor\frac{n}{7}\Big\rfloor,
\]
we obtain the gap sequence $g_n=w_{n+1}-w_n$, which in one index-period reads
\[
(g_n)=(1,1,1,1,6,1,13,1,1,6,1,1,1,13,6,1,1,1,1,6,
13,1,1,1,6,1,1,13,1,6,1,1,1,1,18).
\]

Here $a+1=6$, and we see exactly $\delta-1=6$ six-gaps, occurring at indices
\[
n\in\{4,9,14,19,24,29\}.
\]

Since $d=2$, each collision window must contain one $(a+1)$-gap and one
neighbouring $1$-gap. Inspecting the sequence shows that some six-gaps are
adjacent to two $1$-gaps while others are adjacent to only one. Writing $c(\rho)$
for the number of collision windows containing the $(a+1)$-gap at index $\rho$,
we obtain
\[
c(4)=2,\quad c(9)=2,\quad c(14)=1,\quad c(19)=1,\quad
c(24)=2,\quad c(29)=2.
\]

Hence among the $\delta-1=6$ many $(a+1)$-gaps, exactly two have $c(\rho)=1$ and
the remaining four have $c(\rho)=2$. Therefore
\[
\sum_\rho c(\rho)=1+1+2+2+2+2=10=ad.
\]
\end{example}

\begin{example}\label{ex:LR}
Let $a=4$ and $\delta=7$. Then $d=3$, so each collision window must contain
exactly one gap of size $a+1=5$ and exactly $d-1=2$ gaps of size $1$.
This example illustrates how the parameters $L$ and $R$ determine the number $c(\rho)$. 
The index-period has length $a\delta=28$. 
The gap formula reads
\[
g_n=1+4\,I_a(n)+11\,I_\delta(n),
\]
so the possible gap sizes are $1$, $5$, $12$, and $16$. Since $\delta=7$, only the
gaps $1$ and $5$ can appear inside a collision window; the gaps $12$ and $16$
exceed $\delta$ and therefore terminate any run of $1$-gaps.

The $5$-gaps occur precisely when $n+1$ is divisible by $4$ but not by $7$. 
In one index-period the multiples of $4$ are $4,8,12,16,20,24,28$, 
and among these only $28$ is divisible by $7$. Hence there are exactly 
$6=\delta-1$ gaps of size $5$, as predicted by Lemma~\ref{lem:colldist}. 

Consider the gap corresponding to $n+1=12$. Writing $12=qa$ with $q=3$ and
reducing modulo $\delta=7$ gives $12\equiv5\pmod7$, so $\rho=5$. We now compute
\[
  L=\min(a-1,\rho-1)=\min(3,4)=3,
\]
  and 
\[  
R=\min(a-1,\delta-\rho-1)=\min(3,1)=1.
\]
Thus three consecutive $1$-gaps lie to the left of this $5$-gap and one lies
to its right. Since $d-1=2$, a collision window must choose $x$ one-gaps on the left and
$2-x$ on the right. The constraints $$0\le x\le L=3,$$ and $$0\le 2-x\le R=1$$ 
force $x=1$ or $x=2$. Hence this $(a+1)$-gap contributes $c=2$ collision windows.
This agrees with the right-restricted case of the general formula (where $a\le \rho\le \delta-1$).
\end{example}

\section{The additive subtraction nimbers finally find their way home}\label{sec:gametheo}
The goal of Section~\ref{sec:numtheo} was to establish the following result. 
\begin{corollary}\label{cor:a2}
In one heap-period, $|\W_3|_h = a^2$. 
\end{corollary}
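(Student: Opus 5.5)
The plan is to compute $|\W_3|_h$ as the size of the shifted set minus its overlap: $|\W_0-\delta|_h - |(\W_0-\delta)\cap\W_0|_h$. By definition $\W_3=(\W_0-\delta)\setminus\W_0$, so this reduces the count to the collision set $C$ of \eqref{eq:col}. The first term is immediate. The translate $\W_0-\delta$ is a shift of $\W_0$, and $\W_0$ is periodic with heap-period $h=(3\delta+a)a$ by \eqref{eq:adelta}. Hence $\W_0-\delta$ has the same period and contains exactly $a\delta$ elements per heap-period, one for each index in an index-period. Truncation at zero only affects finitely many initial elements, so it does not change the count in a heap-period taken far enough out, or taken as a residue-class count modulo $h$.

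For the second term I would invoke Theorem~\ref{thm:deltacoll}, which gives $|C|_h=ad$. The one point to check is that the map $w_n\mapsto w_n-\delta$ identifies the collisions correctly. Each element of $(\W_0-\delta)\cap\W_0$ is of the form $w_m=w_n-\delta$ for a unique pair $m<n$, by strict monotonicity of $(w_n)$. So counting collision pairs $(m,n)$, as Theorem~\ref{thm:deltacoll} does via collision windows, is the same as counting elements of the intersection. Since both sets are $h$-periodic, the per-period counts are well defined. Subtracting gives
\[
|\W_3|_h=a\delta-ad=a(\delta-d)=a\cdot a=a^2,
\]
which also matches \eqref{eq:a2}.

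The main obstacle is bookkeeping rather than arithmetic. We must ensure that $|\cdot|_h$ is taken consistently, with the windows counted per index-period in Theorem~\ref{thm:deltacoll} matching the elements of $C$ counted per heap-period. We must also ensure that windows straddling a period boundary are neither lost nor double counted. I would handle this by working modulo $h$ throughout: count residue classes of $\W_0$, $\W_0-\delta$ and $C$ modulo $h$. Periodicity of the gap sequence (Lemma~\ref{lem:gapreg}) makes the window count translation-invariant, so assigning each window to the $(a+1)$-gap it contains gives a bijection with residue classes of collisions.
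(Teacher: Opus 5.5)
Your proposal is correct and is essentially the paper's proof. You write $\W_3=(\W_0-\delta)\setminus\bigl((\W_0-\delta)\cap\W_0\bigr)$, count $a\delta$ elements of the shift per heap-period, take the $ad$ collisions from Theorem~\ref{thm:deltacoll}, and subtract to get $a\delta-ad=a^2$. Your extra bookkeeping (the unique pair $m<n$ by strict monotonicity, and counting residues modulo $h$) only makes explicit what the paper leaves implicit, for instance that counting the upper endpoints $w_n$ as in \eqref{eq:col} or the lower endpoints $w_m=w_n-\delta$ gives the same per-period count.
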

\begin{proof}
By Theorem~\ref{thm:deltacoll}, we have that $|C|_h = ad$, where $d=\delta -a$ and consequently 
\begin{align*}
  |\W_3|_h&=|W_0-\delta|_h - |C|_h\\
  &= a\delta - ad\\
  &= a^2, 
\end{align*}
since the \P-position sequence $(w_n)$ has index-period $a\delta$ and any affine translation has the same index-period.
\end{proof}

We conclude our study, with a restatement and proof of Theorem~\ref{thm:main}.

\begin{thmm}[Main Theorem]
Assume $a<\delta<2a$ and $\gcd(a,\delta)=1$. Then
\[
  \N = \W_0 \sqcup \W_1 \sqcup \W_2 \sqcup \W_3,
\]
and $\W_i$ is precisely the set of positions of nim-value $i$
for $i\in\{0,1,2,3\}$.
\end{thmm}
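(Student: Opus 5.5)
Plan: first establish that the four sets are pairwise disjoint, then use the counting in Corollary~\ref{cor:a2} together with periodicity to conclude that they cover $\N$, and finally run one strong induction on heap size that combines Lemmas~\ref{lem:two} and~\ref{lem:three} with the Ferguson pairing argument to identify the nim-values.

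\textbf{Disjointness.} Each class is a translate of (a subset of) $\W_0$, and the translation amounts are $0$, $a$, $-b$, $-\delta$. The differences between these shifts are $a$, $b$, $\delta$, $a+b$ and $b-\delta=a$. All of these except $\delta$ lie in $S$. So an element common to two of the classes $\W_0,\W_1,\W_2$, or to $\W_3$ and $\W_1$ or $\W_2$, would give two elements of $\W_0$ differing by an element of $S$. Lemma~\ref{lem:anticollision} rules this out. Concretely, $(w_n-\delta)+a=w_m$ would mean $w_n-w_m=\delta-a$... but rather than chase this, it is cleaner to note that $\W_3-\W_1$ corresponds to the shift $\delta+a=b$. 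Likewise $\W_2$ versus $\W_3$ corresponds to $b-\delta=a$, $\W_1$ versus $\W_2$ to $a+b$, $\W_0$ versus $\W_1$ to $a$, and $\W_0$ versus $\W_2$ to $b$. The only pair shifted by $\delta$ is $\W_0$ versus $\W_3$, and that intersection was removed by definition.

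\textbf{Covering.} Here I would argue by density on a heap-period. By Lemma~\ref{lem:gapreg} the sets are eventually periodic with period $h=(3\delta+a)a$, since $w_{n+a\delta}=w_n+h$. So for a window $[N,N+h)$ with $N$ large (beyond the boundary effects from $X-y$ truncation), each of $\W_0,\W_1,\W_2$ contributes $a\delta$ elements and $\W_3$ contributes $a^2$ by Corollary~\ref{cor:a2}. This totals $3a\delta+a^2=h$. Disjointness then forces the window to be covered, and by periodicity all large $x$ are covered. For small $x$ (below some $N$, e.g.\ $N=w_\delta$) I would either check directly using Lemma~\ref{lem:boundary}, or, more cleanly, choose the window starting at $0$ and verify that the translates have no truncation loss there. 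The $\W_2$ elements start at $w_\delta-b=a+\delta\ge0$, indexed from $n=0$ via $v_n$; the $\W_3$ elements come from indices $n\ge\delta$ by Lemma~\ref{lem:boundary}. I would count over the index range $[0,a\delta)$ shifted appropriately, so the periodic count applies from $0$.

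\textbf{Nim-values.} By strong induction on $x$, assume every $y<x$ has nim-value $i$ if and only if $y\in\W_i$. By covering, $x$ lies in exactly one $\W_i$. If $i=0$, anti-collision (Lemma~\ref{lem:anticollision}) with the hypothesis gives value $0$. If $i=1$, the option $x-a\in\W_0$ exists, and no option lies in $\W_1$ by anti-collision of the translate, so the value is $1$. For $i=2$ and $i=3$, Lemmas~\ref{lem:two} and~\ref{lem:three} supply the values, since their proofs use exactly this inductive hypothesis. Because the $\W_i$ partition $\N$, the converse direction ("precisely") follows immediately.

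The main obstacle I expect is the covering step near $0$: making sure the period count $a^2$ for $\W_3$ applies to an initial window and is not distorted by truncation of $\W_0-\delta$ and $\W_0-b$. If this is awkward, I would fall back to a direct finite check for $x<h$ using Lemma~\ref{lem:boundary}, then apply periodicity.
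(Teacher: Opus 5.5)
Your proof is correct and uses the same ingredients as the paper: Lemma~\ref{lem:anticollision}, Lemmas~\ref{lem:two} and~\ref{lem:three}, Corollary~\ref{cor:a2}, and the count $3a\delta+a^2=h$. You order them differently, and the difference is substantive.

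The paper deduces pairwise disjointness from the nim-values (``reachability together with anti-collision''). But those nim-value lemmas are proved under the inductive hypothesis that every $y<x$ already lies in some $\W_i$ with value $i$. Covering, in turn, comes only from the global period count, which itself needs global disjointness, so as written the argument is circular.

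You instead get disjointness directly from Lemma~\ref{lem:anticollision}. The relative shifts for the pairs $(\W_0,\W_1)$, $(\W_0,\W_2)$, $(\W_1,\W_2)$, $(\W_1,\W_3)$, $(\W_2,\W_3)$ are $a$, $b$, $a+b$, $b$, $a$, all in $S$, and $\W_0\cap\W_3=\emptyset$ by definition. This makes $\N=\bigsqcup_i\W_i$ a purely arithmetic fact proved before any game-theoretic induction. The hypothesis used in Lemmas~\ref{lem:two} and~\ref{lem:three} is then genuinely available; the paper's ordering is merely shorter. Drop the stray aside about $(w_n-\delta)+a=w_m$, which is not one of the pairs.

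The boundary worry resolves as you hope, and the sets are in fact purely, not just eventually, periodic. We have $w_0=0$, $w_{n+a\delta}=w_n+h$, and $w_{a\delta-1}=h-(a+b+1)$. Hence the extension $\W_0+h\mathbb{Z}$ is an $h$-periodic subset of $\mathbb{Z}$ with no element in $[-(a+b),-1]$. So $\W_0+a$, $\W_0-b$, $\W_0-\delta$ and $\W_3$ are restrictions to $\N$ of $h$-periodic sets, with no truncation loss. The window $[0,h)$ itself therefore contains exactly $a\delta$, $a\delta$, $a\delta$ and $a^2$ of their elements, and disjointness then gives covering of $[0,h)$ and hence of $\N$.

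Discard the fallback via Lemma~\ref{lem:boundary}. That lemma only locates the start of $\W_3$ and cannot certify covering of an initial segment for general $a,\delta$.
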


\begin{proof}
By Lemma~\ref{lem:anticollision}, each $\W_i$ satisfies anti-collision. We have verified reachability for all $\W_i$ with $i>0$, which, together with anti-collision, implies that the sets are pairwise disjoint. We apply induction. By Ferguson pairing, $\W_1=\W_0+a$ consists exactly of the nim-value one positions. By Lemma~\ref{lem:two}, $\W_2=\W_0-b$ consists only of nim-value two positions, and by Lemma~\ref{lem:three}, $\W_3 = (\W_0-\delta)\setminus \W_0$ consists only of  three-valued  positions. It remains to verify that all positions have been accounted for. 

We have observed $|\W_0|_h = |\W_1|_h = |\W_2|_h = a\delta$, and, via Corollary~\ref{cor:a2}, $|\W_3|_h = a^2$. In total, the disjoint union
\[
  \W_0 \sqcup \W_1 \sqcup \W_2 \sqcup \W_3
\]
contains, per period,
\[
  |\W_0|_h + |\W_1|_h + |\W_2|_h + |\W_3|_h
  =
  3a\delta + a^2
\]
positions, exactly matching the heap-period length in the primitive quadratic regime. Recall, the index-period is $a\delta$, and since $a\delta$ is simultaneously a multiple of $a$ and $\delta$, the direct computation in \eqref{eq:adelta} gives the corresponding period length $w_{n+a\delta} - w_n=(3\delta + a)a$.

Since the sum of the number of positions within one period length coincides with the heap-period length, we have determined the nim-values of all game positions. 
\end{proof}

\section{Open problems and discussion}\label{sec:future}
An obvious problem is to extend this study to generic additive subtraction games. The dual setting in the preprint \cite{Bhagat} indicates that the main complexity is already covered in this study. Let us mention some further problems.

\begin{question}
Can we have bracket expressions with more than three terms to describe \P-positions of a subtraction game (of larger polynomial degree)?
\end{question}
\begin{question}
    For what {\em nim-value seeds} in the sense of \cite{MiklosPost} can we find nim-value sequences on three bracket terms for additive games similar to this study? 
\end{question}

A related number theoretic problem is to study other affine translations of bracket expressions on a fixed number of terms and estimate the sizes of the corresponding collision sets. Let us elaborate this problem. 

\begin{question}[Shift-Collision Structure of Bracket Expression]
Let $a, \delta$ be positive integers with $\gcd(a,\delta) = 1$, and define the three term bracket expression
\[
    w_n \;:=\; n \;+\; a\!\left\lfloor \frac{n}{a} \right\rfloor
              \;+\; (a+\delta)\!\left\lfloor \frac{n}{\delta} \right\rfloor,
    \qquad n \in \mathbb{N}_0,
\]
with associated image set $\W = \{w_n : n \geq 0\} \subseteq \mathbb{N}_0$.
For a positive integer $k$, define the \emph{$k$-collision set}
\[
    \mathcal{C}(k) \;:=\; (\W-k) \cap \W 
    \;=\; \bigl\{\, w_n \in \W : w_n + k \in \W \bigr\},
\]
and let $|\mathcal{C}(k)|_h$ denote the number of elements of $\mathcal{C}(k)$
in one heap-period of length $(3\delta + a)a$.

\begin{enumerate}
    \item[(i)] For which values of $k$ is $\mathcal{C}(k)$ non-empty, and how does
    $|\mathcal{C}(k)|_h$ depend on $k$, $a$, and $\delta$?

    \item[(ii)] Does the three-region residue structure
    (left-restricted, unrestricted, right-restricted)
    established for $k = \delta$ in the primitive quadratic case
    $a < \delta < 2a$ generalise to arbitrary shifts $k$,
    or to the regimes $\delta \leq a$ and $\delta \geq 2a$?

    \item[(iii)] More generally, suppose $w_n$ is an arbitrary bracket expression.
 Characterise $\mathcal{C}(k)$ as a function of $k$, and study the collision density
    \[
        \rho(k) \;:=\; \lim_{N \to \infty}
        \frac{|\mathcal{C}(k) \cap [0,N]|}{N}.
    \]
    Is it always rational? Does $C$ admit a closed form analogous to
    $|\mathcal{C}(\delta)|_h = ad$ in the primitive quadratic setting?
\end{enumerate}
\end{question}

The sequence $w_n$ is not a Beatty sequence in the classical sense; 
originating with Rayleigh \cite{Rayleigh1894} and formalized by Beatty and
Uspensky \cite{Uspensky1927}, early research concerned mostly floor functions 
$\lfloor n\alpha \rfloor$ with irrational moduli $\alpha$, and partitioning problems of more than one such sequence. 
For the rational case, Fraenkel \cite{Fraenkel1969} characterised exactly when two sequences of the
form $\lfloor n\alpha + \beta \rfloor$ with rational $\alpha$ are disjoint and
partition $\mathbb{N}$; Graham \cite{Graham1963} gave a simple proof of
Uspensky's theorem that no partition of $\mathbb{N}$ into three or more
\emph{homogeneous} Beatty sequences on irrational moduli exists. Fraenkel's conjecture
\cite{GrahamOBryant2005, Tijdeman2000} asserts that the only partition of
$\mathbb{N}$ into $m \geq 3$ Beatty sequences with distinct moduli greater
than one is given by rational moduli of the form $\bigl\{(2^m-1)/2^k : 0 \leq k < m\bigr\}$;
this has been verified for $m \leq 7$ but remains open in general.
A game-theoretic study of partitions by rational Beatty sequences, connecting
Fraenkel's conjecture directly to combinatorial game theory, appears in
\cite{FraenkelLarsson2019}.

The sequence $w_n$ studied here is instead a bracket expression involving two rational
Beatty-type floor functions. Its image $\W_0$ does not arise from a
complementary Fraenkel pair, and the four sets $\W_0, \W_1, \W_2, \W_3$ that
partition $\mathbb{N}_0$ have densities $\delta/(3\delta+a)$,
$\delta/(3\delta+a)$, $\delta/(3\delta+a)$, $a/(3\delta+a)$, which do
not conform to the geometric Fraenkel family $2^{m-1-k}/(2^m - 1)$.
The collision problem, estimating $|\mathcal{C}(k)| = |(\W-k) \cap (\W)|$, therefore lies outside the scope of all of the above, and the residue-region and collision-window analysis of Section~\ref{sec:numtheo} appears to be new in this setting of bracket expressions on fixed number of terms.

\bibliographystyle{plain}
\bibliography{referencesAdditive}
\end{document}